\newtheorem{theorem}{Theorem}
\newtheorem{axiom}{Axiom}
\newtheorem{corollary}[theorem]{Corollary}
\newtheorem{definition}[axiom]{Definition}
\newtheorem{example}[theorem]{Example}
\newtheorem{lemma}[theorem]{Lemma}
\newtheorem{proposition}[theorem]{Proposition}
\newtheorem{remark}{Remark}
\newcounter{unnumber}
\newenvironment{proof}{\prf\rm}{\hfill{$\blacksquare$}\endprf}
\newcommand{\R}{\mathbb{R}}%
\newcommand{\E}{\mathbb{E}}%
\DeclareMathOperator*\dom{dom}%
\DeclareMathOperator*\epi{epi}%
\DeclareMathOperator*\co{co}%
\DeclareMathOperator*\cl{cl}%
\DeclareMathOperator*\B{\overline{\R}}%
\DeclareMathOperator*\lev{le}%
\title{Enlargements of positive sets}
 \author{Radu Ioan Bo\c{t} \thanks
 {Faculty of Mathematics, Chemnitz University of Technology,
D-09107 Chemnitz, Germany, e-mail:
 radu.bot@mathematik.tu-chemnitz.de. Research partially supported by DFG (German Research Foundation), project WA 922/1-3.} \and Ern\"{o} Robert Csetnek
 \thanks {Faculty of Mathematics, Chemnitz University of Technology,
D-09107 Chemnitz, Germany, e-mail:
 robert.csetnek@mathematik.tu-chemnitz.de. Research supported by a Graduate Fellowship of the Free State Saxony, Germany.}}
\begin{document}
\maketitle

\noindent \textbf{Abstract.} In this paper we introduce the notion
of enlargement of a positive set in SSD spaces. To a maximally
positive set $A$ we associate a family of enlargements $\E(A)$ and
characterize the smallest and biggest element in this family with
respect to the inclusion relation. We also emphasize the existence
of a bijection between the subfamily of closed enlargements of
$\E(A)$ and the family of so-called representative functions of
$A$. We show that the extremal elements of the latter family are
two functions recently introduced and studied by Stephen Simons.
In this way we extend to SSD spaces some former results given for
monotone and maximally monotone sets in Banach spaces.\\

\noindent \textbf{Key Words.} positive set, SSD space, monotone
operator, Fitzpatrick function, representative function,
enlargement, subdifferential\\

\noindent \textbf{AMS subject classification.} 47H05, 46N10, 42A50

\section{Introduction}

The notion of positive set with respect to a quadratic form
defined on a so-called \emph{symmetrically self-dual space Banach
space (Banach SSD space)} has been introduced by Stephen Simons in
\cite{simons-ssd-jca} (see also \cite{simons}) as an extension of
a monotone set in a Banach space. A number of known results coming
from the theory of monotone operators have been successfully
generalized to this framework. In his investigations Simons has
mainly used some techniques based on the extension of the notion
of Fitzpatrick function from the theory of monotone sets to a
similar concept for positive sets. These investigations have been
continued by the same author in \cite{simons-ssd}, where notions
and results recently introduced in the theory of monotone
operators in general Banach spaces have known an appropriate
generalization to positive sets in Banach SSD spaces.

In analogy to the enlargement of a monotone operator we introduce
and study in this article the notion of enlargement of a positive
set in a SSD space. In this way we extend to SSD spaces some
results given in the literature for monotone and maximally
monotone sets.

In Section 2 we recall the definition of a SSD space $B$ along
with some examples given in \cite{simons-ssd} and give a calculus
rule for the quadratic form $q :B \rightarrow \R$  considered on
it.

In Section 3 we introduce the notion of enlargement of a
$q$-positive set (positive with respect to $q$) $A \subseteq B$ as
being the multifunction $E:\R_+ \rightrightarrows B$ fulfilling $A
\subseteq E(\varepsilon)$ for all $\varepsilon \geq 0$. In
connection with this notion we introduce the so-called
\emph{transportation formula} and provide a characterization of
enlargements which fulfills it. We also also associate to $A$ a
family of enlargements $\E(A)$ for which we provide, in case $A$
is $q$-maximally positive, the smallest and the biggest element
with respect to the partial ordering inclusion relation of the
graphs. In this way we extend to SSD spaces some former results
given in \cite{bu-sv-99, sv, bu-sv-02}.

In Section 4 we assume that $B$ is a Banach SSD Space and deal
with $\E_c(A)$, a subfamily of $\E(A)$, containing those
enlargements of $\E(A)$ having a closed graph. For $\E_c(A)$ we
point out, in case $A$ is $q$-maximally monotone, the smallest and
the biggest element with respect to the partial ordering inclusion
relation of the graphs, too, as well as a bijection between this
subfamily and the set of so-called \emph{representative functions
of $A$}
$${\cal H}(A) = \{h:B \rightarrow \overline{\R}: h \ \mbox{convex, lower semicontinuous}, \
h \geq q \ \mbox{on} \ B, h=q \ \mbox{on} \ A\}.$$ These results
generalize to Banach SSD spaces the ones given in \cite{sv,
bu-sv-02} for enlargements of maximally monotone operators. We
also show that the smallest and the biggest element of ${\cal
H}(A)$ are nothing else than the functions $\Phi_A$ and $^*
\Theta_A$ considered in \cite{simons-ssd} and provide some
characterizations of these functions beyond the ones given in the
mentioned paper. We close the paper by giving a characterization
of the additive enlargements in $\E_c(A)$, in case $A$ is a
$q$-maximally monotone set, which turns out to be helpful when
showing the existence of enlargements having this property.

\section{Preliminary notions and results}

Consider $X$ a real separated locally convex space and $X^*$ its
topological dual space. The notation $\omega(X^*,X)$ stands for
the weak$^*$ topology induced by $X$ on $X^*$, while by $\langle
x,x^*\rangle$ we denote the value of the linear continuous
functional $x^*\in X^*$ at $x\in X$. For a subset $C$ of $X$ we
denote by $\overline{C}$ and $\co(C)$ its \emph{closure} and
\emph{convex hull}, respectively. We also consider the
\emph{indicator function} of the set $C$, denoted by $\delta_C$,
which is zero for $x \in C$ and $+\infty$ otherwise.

For a function $f:X\rightarrow\B=\R\cup\{\pm\infty\}$ we denote by
$\dom(f)=\{x\in X:f(x)<+\infty\}$ its \emph{domain} and by
$\epi(f)=\{(x,r)\in X\times\R:f(x)\leq r\}$ its \emph{epigraph}.
We call $f$ \emph{proper} if $\dom(f)\neq\emptyset$ and
$f(x)>-\infty$ for all $x\in X$. By $\cl(f)$ we denote the
\emph{lower semicontinuous hull} of $f$, namely the function of
which epigraph is the closure of $\epi(f)$ in $X\times\R$, that is
$\epi(\cl(f))=\cl(\epi(f)).$ The function $\co(f)$ is the largest
convex function majorized by $f$.

Having $f:X\rightarrow\B$ a proper function, for $x\in\dom(f)$ we
define the \emph{$\varepsilon$-subdifferential} of $f$ at $x$,
where $\varepsilon\geq 0$, by
$$\partial_{\varepsilon} f(x)=\{x^*\in X^*:f(y)-f(x)\geq\langle
y-x,x^*\rangle-\varepsilon\mbox{ for all }y\in X\}.$$ For
$x\notin\dom(f)$ we take $\partial_{\varepsilon} f(x):=\emptyset$.
The set $\partial f(x):=\partial_0 f(x)$ is then the classical
\emph{subdifferential} of $f$ at $x$.

The \emph{Fenchel-Moreau conjugate} of $f$ is the function
$f^*:X^*\rightarrow\B$ defined by $f^*(x^*)=\sup_{x\in X}\{\langle
x,x^*\rangle-f(x)\}\mbox{ for all }x^*\in X^*$. Next we mention
some important properties of conjugate functions. We have the
so-called \emph{Young-Fenchel inequality}
$f^*(x^*)+f(x)\geq\langle x,x^*\rangle\mbox{ for all }(x,x^*)\in
X\times X^*.$ If $f$ is proper, then $f$ is convex and lower
semicontinuous if and only if $f^{**}=f$ (see \cite{EkTem,
Zalcarte}). As a consequence we have that in case $f$ is convex
and $\cl(f)$ is proper, then $f^{**}=\cl(f)$ (cf. \cite[Theorem
2.3.4]{Zalcarte}).

One can give the following characterizations for the
subdifferential and $\varepsilon$-subdifferential of a proper
function $f$ by means of conjugate functions (see \cite{EkTem,
Zalcarte}):
$$x^*\in\partial f(x)\Leftrightarrow f(x)+f^*(x^*)=\langle
x,x^*\rangle$$ and, respectively,
$$x^*\in\partial_{\varepsilon}f(x)\Leftrightarrow
f(x)+f^*(x^*)\leq\langle x,x^*\rangle+\varepsilon.$$

\begin{definition}\label{ssd} (cf. \cite[Definition 1.2]{simons-ssd}) (i) We say that $(B, \lfloor \cdot,\cdot \rfloor)$
is a \textbf{symmetrically self-dual space (SSD space)} if $B$ is
a nonzero real vector space and $\lfloor \cdot,\cdot
\rfloor:B\times B\rightarrow \R$ is a symmetric bilinear form. We
consider the quadratic form $q:B\rightarrow\R$,
$q(b)=\frac{1}{2}\lfloor b,b \rfloor$ for all $b\in B$.

(ii) A subset $A$ of $B$ is said to be \textbf{$q$-positive} if
$A\neq\emptyset$ and $q(b-c)\geq 0$ for all $b,c\in A$. We say
that $A$ is \textbf{maximally $q$-positive} if $A$ is $q$-positive
and maximal (with respect to the inclusion) in the family of
$q$-positive subsets of $B$.
\end{definition}

\begin{remark}\label{q-max} (i) In every SSD space $B$ the following calculus rule is fulfilled: \begin{equation}\label{q}q(\alpha b+\gamma
c)=\alpha^2q(b)+\gamma^2q(c)+\alpha\gamma\lfloor b,c \rfloor\mbox{
for all } \alpha,\gamma\in\R\mbox{ and }b,c\in B.\end{equation}

(ii) Let $B$ be an SSD space and $A\subseteq B$ be a $q$-positive
set. Then $A$ is maximally $q$-positive if and only if for all
$b\in B$ the implication below holds $$q(b-c)\geq 0\mbox{ for all
}c\in A\Rightarrow b\in A.$$
\end{remark}

\begin{example}\label{ex-ssd-sp} (cf. \cite{simons-ssd}) (a) If $B$ is a Hilbert space
with inner product $(b,c)\mapsto\langle b,c\rangle$ then $B$ is a
SSD space with $\lfloor b,c \rfloor=\langle b,c\rangle$ and
$q(b)=\frac{1}{2}\|b\|^2$ and every subset of $B$ is $q$-positive.

(b)If $B$ is a Hilbert space with inner product
$(b,c)\mapsto\langle b,c\rangle$ then $B$ is a SSD space with
$\lfloor b,c \rfloor=-\langle b,c\rangle$ and
$q(b)=-\frac{1}{2}\|b\|^2$ and the $q$-positive sets are the
singletons.

(c) $\R^3$ is a SSD space with $\lfloor
(b_1,b_2,b_3),(c_1,c_2,c_3) \rfloor=b_1c_2+b_2c_1+b_3c_3$ and
$q(b_1,b_2,b_3)=b_1b_2+\frac{1}{2}b_3^2$. See \cite{simons-ssd}
for a discussion regarding $q$-positive sets in this setting.

(d) Consider $X$ a nonzero Banach space and $B:=X\times X^*$. For
all $b=(x,x^*)$ and $c=(y,y^*)\in B$ we set $\lfloor
b,c\rfloor:=\langle x,y^*\rangle+\langle y,x^*\rangle$. Then $B$
is an SSD space, $q(b)=\langle x,x^*\rangle$ and $q(b-c)=\langle
x-y,x^*-y^*\rangle$. Hence for $A\subseteq B$ we have that $A$ is
$q$-positive exactly when $A$ is a nonempty monotone subset of
$X\times X^*$ in the usual sense and $A$ is maximally $q$-positive
exactly when $A$ is a maximally monotone subset of $X\times X^*$
in the usual sense. The classical example of a maximal monotone
operator is the subdifferential of a proper, convex and lower
semicontinuous function (see \cite{rock}). However, there exist
maximal monotone operators which are not subdifferentials (see
\cite{simons}).
\end{example}

Let us consider in the following an arbitrary SSD space $B$ and a
function $f:B\rightarrow \B$. We write $f^@$ for the conjugate of
$f$ with respect to the pairing $\lfloor\cdot,\cdot\rfloor$, that
is $f^@(c)=\sup_{b\in B}\{\lfloor b,c\rfloor-f(b)\}$. We write
${\cal P}(f):=\{b\in B:f(b)=q(b)\}$. If $f$ is proper, convex,
$f\geq q$ on $B$ and ${\cal P}(f)\neq\emptyset$, then ${\cal
P}(f)$ is a $q$-positive subset of $B$ (see \cite[Lemma
1.9]{simons-ssd}). Conditions under which ${\cal P}(f)$ is
maximally $q$-positive are given in \cite[Theorem
2.9]{simons-ssd}.

\section{Enlargements of positive sets in SSD spaces}

In this section we introduce the concept of enlargement of a
positive set and study some of its algebraic properties.

\begin{definition}\label{enlar-pos} Let $B$ be an SSD space. Given $A$ a $q$-positive subset of $B$, we say that the
multifunction $E:\R_+\rightrightarrows B$ is an
\textbf{enlargement} of $A$ if $$A\subseteq E(\varepsilon)\mbox{
for all }\varepsilon\geq 0.$$\end{definition}

\begin{example}\label{E_A} Let $B$ be an SSD space and $A$ a $q$-positive subset of
$B$. The multifunction $E^A:\R_+\rightrightarrows B$,
$$E^A(\varepsilon):= \{b\in B:q(b-c)\geq -\varepsilon\mbox{ for
all }c\in A\}$$ is an enlargement of $A$. Let us notice that $A$
is maximally $q$-positive if and only if $A=E^A(0)$. Moreover, in
the framework of Example \ref{ex-ssd-sp} (d), for the graph of
$E^A$ we have $G(E^A)=\{(\varepsilon,x,x^*):\langle
x-y,x^*-y^*\rangle\geq-\varepsilon\mbox{ for all }(y,y^*)\in A\}$,
hence in this case $G(E^A)=G(B^A)$, where $B^A:\R_+\times
X\rightrightarrows X^*$, $B^A(\varepsilon,x)=\{x^*:\langle
x-y,x^*-y^*\rangle\geq-\varepsilon,\forall (y,y^*)\in A\}$, was
defined in \cite{bur-ius-sv} and studied in \cite{bur-iu,
bur-iu-carte, bur-ius-sv, bu-sv-99, bu-sv-02, sv}.
\end{example}

\noindent The following definition extends to SSD spaces a notion
given in \cite{bur-sag-sv, bu-sv-99} (see also \cite[Definition
2.3]{bu-sv-02}).

\begin{definition}\label{transp-f2} We say that the multifunction
$E:\R_+\rightrightarrows B$ satisfies the \textbf{transportation
formula} if for every $\varepsilon_1, \varepsilon_2\geq 0$,
$b^1\in E(\varepsilon_1)$, $b^2\in E(\varepsilon_2)$ and every
$\alpha_1,\alpha_2\geq 0$, $\alpha_1+\alpha_2=1$ we have
$\varepsilon:=\alpha_1\varepsilon_1+\alpha_2\varepsilon_2+\alpha_1\alpha_2q(b^1-b^2)\geq
0$ and $\alpha_1b^1+\alpha_2b^2\in E(\varepsilon)$.
\end{definition}

\begin{proposition}\label{E_A-transp-f2} Let $B$ be an SSD space
and $A\subseteq B$ be a maximally $q$-positive set. Then $E^A$
satisfies the transportation formula.
\end{proposition}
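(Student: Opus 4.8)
The plan is to verify the two assertions of the transportation formula directly: first that $\varepsilon \geq 0$, and then that the convex combination $\alpha_1 b^1 + \alpha_2 b^2$ lies in $E^A(\varepsilon)$.

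Let me set up the proof. We take $\varepsilon_1, \varepsilon_2 \geq 0$, points $b^1 \in E^A(\varepsilon_1)$ and $b^2 \in E^A(\varepsilon_2)$, and weights $\alpha_1, \alpha_2 \geq 0$ with $\alpha_1 + \alpha_2 = 1$. By the definition of $E^A$ in Example \ref{E_A}, this means $q(b^1 - c) \geq -\varepsilon_1$ and $q(b^2 - c) \geq -\varepsilon_2$ for all $c \in A$.

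First I would check nonnegativity of $\varepsilon$. Since $A$ is $q$-positive it is nonempty, so fix any $c \in A$. I would apply the calculus rule \eqref{q} from Remark \ref{q-max}(i) to expand $q\bigl((b^1 - c) - (b^2 - c)\bigr) = q(b^1 - b^2)$ in a different way—namely, writing $b^1 - b^2 = (b^1 - c) - (b^2 - c)$ and expanding via \eqref{q}. Combined with the bounds $q(b^1-c)\geq-\varepsilon_1$ and $q(b^2-c)\geq-\varepsilon_2$, this should let me bound $\alpha_1 \varepsilon_1 + \alpha_2 \varepsilon_2 + \alpha_1\alpha_2 q(b^1-b^2)$ from below. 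The cleaner route is to compute $q(\alpha_1 b^1 + \alpha_2 b^2 - c)$ directly: since $\alpha_1 + \alpha_2 = 1$, we have $\alpha_1 b^1 + \alpha_2 b^2 - c = \alpha_1(b^1 - c) + \alpha_2(b^2 - c)$, and expanding this with \eqref{q} gives $\alpha_1^2 q(b^1-c) + \alpha_2^2 q(b^2-c) + \alpha_1\alpha_2 \lfloor b^1 - c, b^2 - c\rfloor$. The identity $\alpha_1\alpha_2\lfloor b^1-c,b^2-c\rfloor = \alpha_1\alpha_2[q(b^1-c)+q(b^2-c)-q(b^1-b^2)]$ (another instance of \eqref{q}) then collapses the expression, using $\alpha_1^2 + \alpha_1\alpha_2 = \alpha_1$ and $\alpha_2^2 + \alpha_1\alpha_2 = \alpha_2$, to
$$q(\alpha_1 b^1 + \alpha_2 b^2 - c) = \alpha_1 q(b^1-c) + \alpha_2 q(b^2-c) - \alpha_1\alpha_2 q(b^1-b^2).$$
This single identity does the heavy lifting: substituting the lower bounds on $q(b^1-c)$ and $q(b^2-c)$ yields $q(\alpha_1 b^1 + \alpha_2 b^2 - c) \geq -\alpha_1\varepsilon_1 - \alpha_2\varepsilon_2 - \alpha_1\alpha_2 q(b^1-b^2) = -\varepsilon$. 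For nonnegativity of $\varepsilon$ itself, I would additionally invoke maximal $q$-positivity: the equivalent characterization $A = E^A(0)$ noted in Example \ref{E_A} is not directly needed, but I expect that taking $c$ ranging over $A$ and exploiting $q(\alpha_1 b^1 + \alpha_2 b^2 - c) \geq -\varepsilon$ together with $q$-positivity will force $\varepsilon \geq 0$; more simply, since the identity holds for every $c \in A$ and $A$ is $q$-positive, choosing $c$ appropriately (or passing through the displayed identity once more) gives the bound.

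The main point, and the only place maximal $q$-positivity is genuinely used, is establishing $\varepsilon \geq 0$: the second conclusion $\alpha_1 b^1 + \alpha_2 b^2 \in E^A(\varepsilon)$ follows immediately once the displayed identity is in hand, since that identity shows $q(\alpha_1 b^1 + \alpha_2 b^2 - c) \geq -\varepsilon$ for every $c \in A$, which is exactly the membership condition defining $E^A(\varepsilon)$. I expect the only subtlety is handling the nonnegativity of $\varepsilon$ cleanly; the algebraic identity, being a direct consequence of the bilinear expansion \eqref{q}, is routine to verify but must be assembled carefully to track the coefficients $\alpha_i^2$ versus $\alpha_i$.
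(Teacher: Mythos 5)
Your algebraic core is exactly the paper's: the identity
$$q(\alpha_1 b^1+\alpha_2 b^2-c)=\alpha_1 q(b^1-c)+\alpha_2 q(b^2-c)-\alpha_1\alpha_2 q(b^1-b^2)\geq -\alpha_1\varepsilon_1-\alpha_2\varepsilon_2-\alpha_1\alpha_2 q(b^1-b^2)$$
for all $c\in A$, derived from \eqref{q} exactly as you describe, and the observation that this inequality is precisely the membership condition $\alpha_1b^1+\alpha_2b^2\in E^A(\varepsilon)$ once $\varepsilon\geq 0$ is known. That part is correct and complete.

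There is, however, a genuine gap at the one step you yourself flag as the crux: you never actually prove $\varepsilon\geq 0$. You write that ``taking $c$ ranging over $A$ and exploiting $q(\alpha_1 b^1+\alpha_2 b^2-c)\geq-\varepsilon$ together with $q$-positivity will force $\varepsilon\geq 0$'' and that ``choosing $c$ appropriately'' gives the bound, but $q$-positivity of $A$ alone does \emph{not} force this. A concrete counterexample: in $B=\R\times\R$ with $\lfloor(x,x^*),(y,y^*)\rfloor=xy^*+yx^*$, take the non-maximal $q$-positive set $A=\{(0,0)\}$, $b^1=(2,0)$, $b^2=(0,2)$, $\varepsilon_1=\varepsilon_2=0$, $\alpha_1=\alpha_2=\tfrac12$; then $b^1,b^2\in E^A(0)$ but $\varepsilon=\tfrac14 q(b^1-b^2)=-1<0$. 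The missing idea is the specific way maximality enters: assume $\varepsilon<0$; then the displayed inequality gives $q(\alpha_1b^1+\alpha_2b^2-c)\geq-\varepsilon>0$ for \emph{every} $c\in A$, so by the characterization of maximal $q$-positivity in Remark \ref{q-max}(ii) the point $\alpha_1b^1+\alpha_2b^2$ itself belongs to $A$; now the ``appropriate'' choice is $c:=\alpha_1b^1+\alpha_2b^2$, which yields $0=q(0)>0$, a contradiction. Without this argument (or an equivalent one) the nonnegativity claim is unproved, and since it is the only nonroutine point of the proposition, the proof is incomplete as written.
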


\begin{proof} Take  $\varepsilon_1, \varepsilon_2\geq 0$,
$b^1\in E^A(\varepsilon_1)$, $b^2\in E^A(\varepsilon_2)$ and
$\alpha_1,\alpha_2\geq 0$, $\alpha_1+\alpha_2=1$. We have to show
that
\begin{equation}\label{ineq-tr-f-EA}q(\alpha_1b^1+\alpha_2b^2-c)\geq
-\alpha_1\varepsilon_1-\alpha_2\varepsilon_2-\alpha_1\alpha_2q(b^1-b^2)\mbox{
for all }c\in A\end{equation} and
\begin{equation}\label{ineq-tr-f-EA2}\alpha_1b^1+\alpha_2b^2+\alpha_1\alpha_2q(b^1-b^2)\geq
0.\end{equation} Let $c$ be an arbitrary element in $A$. By using
the inequalities $q(b^1-c)\geq-\varepsilon_1$ and
$q(b^2-c)\geq-\varepsilon_2$ and the calculus rule \eqref{q} we
obtain
$$q(\alpha_1b^1+\alpha_2b^2-c)=q\big(\alpha_1(b^1-c)+\alpha_2(b^2-c)\big)=\alpha_1^2q(b^1-c)+\alpha_2^2q(b^2-c)
+\alpha_1\alpha_2\lfloor
b^1-c,b^2-c\rfloor$$$$=\alpha_1^2q(b^1-c)+\alpha_2^2q(b^2-c)
+\alpha_1\alpha_2\big(q(b^1-c)+q(b^2-c)-q(b^1-b^2)\big)$$$$=\alpha_1q(b^1-c)+\alpha_2q(b^2-c)-\alpha_1\alpha_2q(b^1-b^2)
\geq
-\alpha_1\varepsilon_1-\alpha_2\varepsilon_2-\alpha_1\alpha_2q(b^1-b^2).$$
Let us suppose that
$\alpha_1b^1+\alpha_2b^2+\alpha_1\alpha_2q(b^1-b^2)< 0$. From
\eqref{ineq-tr-f-EA} we obtain
\begin{equation}\label{contr}q(\alpha_1b^1+\alpha_2b^2-c)> 0\mbox{ for all }c\in A.\end{equation} Since
$A$ is maximally $q$-positive we get $\alpha_1b^1+\alpha_2b^2\in
A$ (cf. Remark \ref{q-max} (ii)). By choosing
$c:=\alpha_1b^1+\alpha_2b^2\in A$ in \eqref{contr} we get $0>0$,
which is a contradiction. Hence \eqref{ineq-tr-f-EA2} is also
fulfilled and the proof is complete. \end{proof}

The following result establishes a connection between the
transportation formula and convexity (see also \cite[Lemma
3.2]{sv}).

\begin{proposition}\label{transp-f2n-conv} Let $B$ be an SSD
space, $E:\R_+\rightrightarrows B$ a multifunction and define the
function $\Psi:\R\times B\rightarrow \R\times B$,
$\Psi(\epsilon,b)=(\epsilon+q(b), b)$ for all $(\epsilon,b)\in
\R\times B$. The following statements are equivalent:
\begin{enumerate}\item[(i)] $E$ satisfies the transportation formula;

\item[(ii)] $E$ satisfies the \textbf{generalized transportation
formula} (or the \textbf{n-point transportation formula}), that is
for all $n\geq 1 $, $\varepsilon_i\geq 0$, $b^i\in
E(\varepsilon_i)$ and $\alpha_i\geq 0$, $i=1,...,n$, with
$\sum_{i=1}^{n}\alpha_i=1$ we have
$\varepsilon:=\sum_{i=1}^{n}\alpha_i\varepsilon_i+\sum_{i=1}^{n}\alpha_iq\big(b^i-\sum_{j=1}^{n}\alpha_jb^j\big)\geq
0$ and $\sum_{i=1}^{n}\alpha_ib^i\in E(\varepsilon)$;

\item[(iii)] $\Psi(G(E))$ is convex.
\end{enumerate}
\end{proposition}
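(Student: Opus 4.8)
The plan is to establish (i) $\Leftrightarrow$ (iii) directly and then to close the cycle through the trivial implication (ii) $\Rightarrow$ (i) together with (iii) $\Rightarrow$ (ii). The engine behind every step is one algebraic identity. For any $b^1,\dots,b^n\in B$ and any $\alpha_i\geq 0$ with $\sum_{i=1}^n\alpha_i=1$, writing $\bar b:=\sum_{j=1}^n\alpha_j b^j$, I claim
$$\sum_{i=1}^n\alpha_i q(b^i)-q(\bar b)=\sum_{i=1}^n\alpha_i q(b^i-\bar b).$$
To see this I would expand each $q(b^i-\bar b)=q(b^i)-\lfloor b^i,\bar b\rfloor+q(\bar b)$ and sum against the weights; the cross terms collapse because $\sum_i\alpha_i\lfloor b^i,\bar b\rfloor=\lfloor\bar b,\bar b\rfloor=2q(\bar b)$ by bilinearity and $\sum_i\alpha_i q(\bar b)=q(\bar b)$ since the weights sum to $1$. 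In the case $n=2$ this reads $\alpha_1 q(b^1)+\alpha_2 q(b^2)-q(\alpha_1 b^1+\alpha_2 b^2)=\alpha_1\alpha_2 q(b^1-b^2)$, which can also be read straight off the calculus rule \eqref{q}.

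Next I would unwind $\Psi(G(E))$. Since $\Psi$ fixes the second coordinate and $G(E)\subseteq\R_+\times B$, a pair $(s,b)$ belongs to $\Psi(G(E))$ exactly when $s-q(b)\geq 0$ and $b\in E(s-q(b))$. Now take two points $(\varepsilon_1+q(b^1),b^1)$ and $(\varepsilon_2+q(b^2),b^2)$ of $\Psi(G(E))$, so that $\varepsilon_i\geq 0$ and $b^i\in E(\varepsilon_i)$, together with $\alpha_1,\alpha_2\geq 0$, $\alpha_1+\alpha_2=1$. Their convex combination has second coordinate $\alpha_1 b^1+\alpha_2 b^2$, and by the $n=2$ identity its first coordinate minus $q$ of its second coordinate equals precisely $\alpha_1\varepsilon_1+\alpha_2\varepsilon_2+\alpha_1\alpha_2 q(b^1-b^2)=:\varepsilon$. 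Hence this convex combination lies in $\Psi(G(E))$ if and only if $\varepsilon\geq 0$ and $\alpha_1 b^1+\alpha_2 b^2\in E(\varepsilon)$, which is exactly the conclusion of the transportation formula. Since every pair of points of $\Psi(G(E))$ arises in this way, this yields (i) $\Leftrightarrow$ (iii) in one stroke.

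Finally I would close the cycle. The implication (ii) $\Rightarrow$ (i) is immediate upon taking $n=2$. For (iii) $\Rightarrow$ (ii) I would start from $n$ points $(\varepsilon_i+q(b^i),b^i)\in\Psi(G(E))$ with $b^i\in E(\varepsilon_i)$; convexity forces their convex combination $\sum_i\alpha_i(\varepsilon_i+q(b^i),b^i)$ back into $\Psi(G(E))$, and the general identity shows that its first coordinate minus $q$ of its second coordinate is $\sum_i\alpha_i\varepsilon_i+\sum_i\alpha_i q(b^i-\bar b)=\varepsilon$, so that $\varepsilon\geq 0$ and $\bar b=\sum_i\alpha_i b^i\in E(\varepsilon)$, which is the $n$-point formula. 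The only real obstacle is the bookkeeping in the displayed identity; once it is in hand, both (iii) and (ii) are literal restatements of the single condition that the first coordinate minus $q$ of the second coordinate be nonnegative with the second coordinate enlarged at exactly that level.
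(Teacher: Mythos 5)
Your proof is correct and follows essentially the same route as the paper: the same bijection $\Psi$, the same key identity $\sum_{i}\alpha_i q(b^i-\bar b)=\sum_i\alpha_i q(b^i)-q(\bar b)$ (with its $n=2$ specialization $\alpha_1\alpha_2 q(b^1-b^2)$), and the same reading of the transportation formula as convexity of $\Psi(G(E))$. The only difference is organizational --- you package (i)$\Leftrightarrow$(iii) as a single biconditional via the membership criterion for $\Psi(G(E))$, whereas the paper runs the cycle (ii)$\Rightarrow$(i)$\Rightarrow$(iii)$\Rightarrow$(ii) --- but the underlying computations are identical.
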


\begin{proof} We notice first that $\Psi$ is a bijective function
with inverse $\Psi^{-1} :\R\times B\rightarrow \R\times B$,
$\Psi^{-1}(\epsilon,b) = (\epsilon-q(b),b)$.

(ii)$\Rightarrow$(i) Take  $\varepsilon_1, \varepsilon_2\geq 0$,
$b^1\in E(\varepsilon_1)$, $b^2\in E(\varepsilon_2)$ and
$\alpha_1,\alpha_2\geq 0$, $\alpha_1+\alpha_2=1$. Then
$\varepsilon:=\alpha_1 \varepsilon_1 + \alpha_2 \varepsilon_2 +
\alpha_1 q(b^1-(\alpha_1 b^1 + \alpha_2 b^2)) + \alpha_2
q(b^2-(\alpha_1 b^1 + \alpha_2 b^2)) \geq 0$ and $\alpha_1 b^1 +
\alpha_2 b^2 \in E(\varepsilon)$. Since $$\varepsilon=\alpha_1
\varepsilon_1 + \alpha_2 \varepsilon_2 + \alpha_1 \alpha_2^2
q(b^1-b^2) + \alpha_2 \alpha_1^2q(b^1-b^2) = \alpha_1
\varepsilon_1 + \alpha_2 \varepsilon_2 +
\alpha_1\alpha_2q(b^1-b^2),$$ this implies that $E$ satisfies the
transportation formula.

(i)$\Rightarrow$(iii) Let be $(\mu_1,b^1), (\mu_2,b^2) \in
\Psi(G(E))$ and $\alpha_1,\alpha_2\geq 0$ with
$\alpha_1+\alpha_2=1$. Then there exist $\varepsilon_1,
\varepsilon_2 \geq 0$ such that $\mu_1 = \varepsilon_1 + q(b^1)$,
$b^1 \in E(\varepsilon_1)$ and $\mu_2 = \varepsilon_2 + q(b^2)$,
$b^1 \in E(\varepsilon_2)$. By (i) we have that
$\varepsilon:=\alpha_1\varepsilon_1+\alpha_2\varepsilon_2+\alpha_1\alpha_2q(b^1-b^2)\geq
0$ and $\alpha_1b^1+\alpha_2b^2\in E(\varepsilon)$. Using
\eqref{q} we further get that
$$\varepsilon + q(\alpha_1 b^1 + \alpha_2
b^2) = \alpha_1 \varepsilon_1 + \alpha_2 \varepsilon_2 +
\alpha_1\alpha_2 (q(b^1) + q(b^2) - \lfloor b^1, b^2 \rfloor) +
\alpha_1^2 q(b^1) + \alpha_2^2 q(b^2) +$$ $$\alpha_1\alpha_2
\lfloor b^1, b^2 \rfloor = \alpha_1 \varepsilon_1 + \alpha_2
\varepsilon_2 + \alpha_1 q(b^1) + \alpha_2 q(b^2) = \alpha_1 \mu_1
+ \alpha_2 \mu_2.$$ Thus
$$\alpha_1(\mu_1,b^1) + \alpha_2(\mu_2,b^2) = (\varepsilon + q(\alpha_1 b^1 + \alpha_2
b^2), \alpha_1 b^1 + \alpha_2 b^2) = \Psi(\varepsilon, \alpha_1
b^1 + \alpha_2 b^2) \in \Psi(G(E))$$ and this provides the
convexity of $\Psi(G(E))$.

(iii)$\Rightarrow$(ii) Let be $n\geq 1 $, $\varepsilon_i\geq 0$,
$b^i\in E(\varepsilon_i)$ and $\alpha_i\geq 0$, $i=1,...,n$, with
$\sum_{i=1}^{n}\alpha_i=1$. This means that $(\varepsilon_i +
q(b^i), b^i) \in \Psi(G(E))$ for $i=1,...,n$. Denote
$b:=\sum_{i=1}^{n}\alpha_ib^i$ and $\varepsilon:= \sum_{i=1}^n
\alpha_i(\varepsilon_i + q(b^i)) - q(b)$. By using the convexity
of $\Psi(G(E))$ one has $(\varepsilon + q(b),b) = \sum_{i=1}^n
\alpha_i (\varepsilon_i + q(b^i), b^i) \in \Psi(G(E))$, which
implies that $\Psi^{-1}(\varepsilon + q(b),b) = (\varepsilon, b)
\in G(E)$. From here if follows that $\varepsilon = \sum_{i=1}^n
\alpha_i\varepsilon_i + \sum_{i=1}^n \alpha_i q(b^i) - q(b) \geq
0$ and $b = \sum_{i=1}^{n}\alpha_ib^i \in E(\varepsilon)$. To
conclude the proof we have only to show that $\sum_{i=1}^n
\alpha_i q(b^i - b) = \sum_{i=1}^n \alpha_i q(b^i) - q(b)$. Indeed
$$\sum_{i=1}^n \alpha_i q(b^i - b) = \sum_{i=1}^n \alpha_i q(b^i) + \sum_{i=1}^n \alpha_i q(b) - \sum_{i=1}^n \alpha_i \lfloor b^i, b \rfloor =
\sum_{i=1}^n \alpha_i q(b^i) + q(b) - \lfloor b,b \rfloor  =$$
$$\sum_{i=1}^n \alpha_i q(b^i) + q(b) - 2 q(b) = \sum_{i=1}^n
\alpha_i q(b^i) - q(b)$$ and, consequently, the generalized
transportation formula holds.\end{proof}

\noindent As in \cite[Definition 3.3]{sv} (see also
\cite[Definition 2.5]{bu-sv-02}) one can introduce a family of
enlargements associated to a positive set.

\begin{definition}\label{fam-enl} Let $B$ be an SSD space and $A\subseteq
B$ be a $q$-positive set. We define $\E(A)$ as being the family of
multifunctions $E:\R_+\rightrightarrows B$ satisfying the
following properties:

(r1) $E$ is an enlargement of $A$, that is $$A\subseteq
E(\varepsilon)\mbox{ for all }\varepsilon\geq 0;$$

(r2) $E$ is nondecreasing:
$$0\leq\varepsilon_1\leq\varepsilon_2\Rightarrow E(\varepsilon_1)\subseteq E(\varepsilon_2);$$

(r3) $E$ satisfies the transportation formula.
\end{definition}

If $A$ is maximally $q$-positive then $E^A$ satisfies the
properties (r1)-(r3) (cf. Example \ref{E_A} and Proposition
\ref{E_A-transp-f2}, while (r2) is obviously satisfied), hence in
this case the family $\E(A)$ is nonempty. Define the multifunction
$E_A:\R_+\rightrightarrows B$,
$E_A(\varepsilon):=\bigcap\nolimits_{E\in \E(A)}E(\varepsilon)$
for all $\varepsilon\geq 0$.

\begin{proposition}\label{sm-big-E} Let $B$ be an SSD space and $A\subseteq
B$ be a maximally $q$-positive set. Then \begin{enumerate}
\item[(i)] $E_A, E^A\in \E(A)$;

\item[(ii)] $E_A$ and $E^A$ are, respectively, the smallest and
the biggest elements in $\E(A)$ with respect to the partial
ordering inclusion relation of the graphs, that is
$G(E_A)\subseteq G(E)\subseteq G(E^A)$ for all $E\in \E(A)$.
\end{enumerate}
\end{proposition}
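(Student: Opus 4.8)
The plan is to prove both statements by establishing the two containments $G(E_A)\subseteq G(E)\subseteq G(E^A)$ for every $E\in\E(A)$, and then observing that $E_A$ inherits membership in $\E(A)$ from this sandwiching together with the fact that an intersection of well-behaved enlargements stays well-behaved.

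First I would dispose of the upper bound. For an arbitrary $E\in\E(A)$ I must show $E(\varepsilon)\subseteq E^A(\varepsilon)$ for every $\varepsilon\ge 0$. So fix $b\in E(\varepsilon)$ and an arbitrary $c\in A$. Since $A\subseteq E(0)$ by (r1), we have $c\in E(0)$; now apply the transportation formula (r3) to the pair $b\in E(\varepsilon)$, $c\in E(0)$ with weights $\alpha_1=\alpha_2=\tfrac12$. The formula guarantees $\tfrac12\varepsilon+\tfrac14 q(b-c)\ge 0$, i.e. $q(b-c)\ge -2\varepsilon$. This is not quite $q(b-c)\ge-\varepsilon$, so instead I would use general weights $\alpha_1=\lambda$, $\alpha_2=1-\lambda$ with $b^1=b$, $b^2=c$ and let $\lambda\to 1^-$: the transportation formula yields $\lambda\varepsilon+\lambda(1-\lambda)q(b-c)\ge 0$, hence $\varepsilon+(1-\lambda)q(b-c)\ge 0$ for all $\lambda\in[0,1)$; sending $\lambda\uparrow 1$ gives $q(b-c)\ge-\varepsilon$. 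As $c\in A$ was arbitrary, $b\in E^A(\varepsilon)$ by definition of $E^A$. This shows $G(E)\subseteq G(E^A)$, and since this holds for every member of $\E(A)$ it also applies to $E_A$.

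Next, the lower bound $G(E_A)\subseteq G(E)$ is immediate from the definition of $E_A$ as the pointwise intersection: $E_A(\varepsilon)=\bigcap_{E'\in\E(A)}E'(\varepsilon)\subseteq E(\varepsilon)$ for each fixed $E\in\E(A)$ and each $\varepsilon\ge 0$. Combined with the previous paragraph this establishes part (ii) outright, modulo knowing $E_A\in\E(A)$.

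It remains to verify part (i), namely that $E^A\in\E(A)$ and $E_A\in\E(A)$. For $E^A$ this is exactly the remark after Definition \ref{fam-enl}: (r1) holds by Example \ref{E_A}, (r3) by Proposition \ref{E_A-transp-f2}, and (r2) is clear since enlarging $\varepsilon$ relaxes the defining inequality $q(b-c)\ge-\varepsilon$. For $E_A$ I would check the three axioms directly. Property (r1) holds because $A\subseteq E(\varepsilon)$ for every $E\in\E(A)$, so $A$ lies in the intersection; and the family is nonempty (it contains $E^A$) so the intersection is meaningful. Property (r2) is inherited because an intersection of nondecreasing multifunctions is nondecreasing. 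The main obstacle is (r3): I must show the intersection $E_A$ still satisfies the transportation formula. Here I would argue pointwise---given $b^1\in E_A(\varepsilon_1)$ and $b^2\in E_A(\varepsilon_2)$, for every single $E\in\E(A)$ we have $b^1\in E(\varepsilon_1)$ and $b^2\in E(\varepsilon_2)$, so (r3) for that $E$ yields $\varepsilon:=\alpha_1\varepsilon_1+\alpha_2\varepsilon_2+\alpha_1\alpha_2 q(b^1-b^2)\ge 0$ (a condition independent of $E$) and $\alpha_1 b^1+\alpha_2 b^2\in E(\varepsilon)$. Since the resulting point lies in $E(\varepsilon)$ for \emph{every} $E$, it lies in the intersection $E_A(\varepsilon)$, and the nonnegativity of $\varepsilon$ holds once (from any one $E$, e.g. $E^A$). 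Thus $E_A$ satisfies the transportation formula, completing the proof that $E_A\in\E(A)$.
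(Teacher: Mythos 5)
Your overall strategy coincides with the paper's: the lower bound $G(E_A)\subseteq G(E)$ and the membership $E_A\in\E(A)$ come straight from the definition of $E_A$ as an intersection (your explicit check of (r1)--(r3) for $E_A$, including the observation that $\varepsilon:=\alpha_1\varepsilon_1+\alpha_2\varepsilon_2+\alpha_1\alpha_2 q(b^1-b^2)$ does not depend on which $E$ the transportation formula is applied to, is correct and is exactly what the paper leaves implicit when it says the claim ``follows immediately from the definition''), and the upper bound $G(E)\subseteq G(E^A)$ is obtained by applying the transportation formula to $b\in E(\varepsilon)$ and $c\in A\subseteq E(0)$ with weights $\lambda$ and $1-\lambda$ and passing to a limit --- precisely the paper's computation, which it merely phrases as a proof by contradiction.

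There is, however, one concrete error in the limiting step. From $\lambda\varepsilon+\lambda(1-\lambda)q(b-c)\ge 0$ you correctly divide by $\lambda>0$ to obtain $\varepsilon+(1-\lambda)q(b-c)\ge 0$ for all $\lambda\in(0,1)$ (note that $\lambda=0$ must be excluded here), but you then send $\lambda\uparrow 1$. In that limit $(1-\lambda)\to 0$ and the inequality degenerates to $\varepsilon\ge 0$, which is vacuously true and does not yield $q(b-c)\ge-\varepsilon$. The correct limit is $\lambda\downarrow 0$: then $(1-\lambda)\to 1$ and you obtain $\varepsilon+q(b-c)\ge 0$, i.e. $q(b-c)\ge-\varepsilon$, as in the paper. (Equivalently: if $q(b-c)\ge 0$ the claim is trivial since $\varepsilon\ge 0$, and if $q(b-c)<0$ the constraint $\varepsilon+(1-\lambda)q(b-c)\ge 0$ is tightest for small $\lambda$, so that is the end at which you must take the limit.) With this one direction reversed, your argument is complete and is essentially identical to the paper's.
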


\begin{proof} (i) That $E^A\in
\E(A)$ was pointed out above. The statement $E_A\in\E(A)$ follows
immediately, if we take into consideration the definition of
$E_A$.

(ii) $E_A$ is obviously the smallest element in $\E(A)$. We prove
in the following that $E^A$ is the biggest element in $\E(A)$.
Suppose that  $E^A$ is not the biggest element in $\E(A)$, namely
that there exist $E\in\E(A)$ and $(\varepsilon,b)\in G(E)\setminus
G(E^A)$. Since $(\varepsilon,b)\not\in G(E^A)$, there exists $c\in
A$ such that $q(b-c)<-\varepsilon$. Let $\lambda\in(0,1)$ be
fixed. As $E$ satisfies (r1), we have $c\in A\subseteq E(0)$, that
is $(0,c)\in G(E)$. As $(\varepsilon,b),(0,c)\in G(E)$,
$\lambda\in (0,1)$ and $E$ satisfies the transportation formula,
we obtain $\lambda\varepsilon+\lambda(1-\lambda) q(b-c) \geq 0$,
hence $\varepsilon+(1-\lambda) q(b-c)\geq 0$. Since this
inequality must hold for arbitrary $\lambda\in (0,1)$ we get
$\varepsilon+q(b-c)\geq 0$, which is a contradiction.\end{proof}

\begin{lemma}\label{gen-prop-enl} Let $B$ be an SSD space, $A\subseteq
B$ a maximally $q$-positive set and $E\in \E(A)$. Then
$$E(0)=\bigcap_{\varepsilon>0}E(\varepsilon)=A.$$
\end{lemma}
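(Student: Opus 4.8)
The plan is to prove the chain of equalities $E(0) = \bigcap_{\varepsilon > 0} E(\varepsilon) = A$ by establishing inclusions in both directions. I would first record the trivial inclusions that come for free from properties (r1) and (r2). From (r1) we have $A \subseteq E(0)$, and since $E$ is nondecreasing by (r2), $E(0) \subseteq E(\varepsilon)$ for every $\varepsilon > 0$, whence $E(0) \subseteq \bigcap_{\varepsilon > 0} E(\varepsilon)$. Also from (r1), $A \subseteq E(\varepsilon)$ for all $\varepsilon > 0$, so $A \subseteq \bigcap_{\varepsilon > 0} E(\varepsilon)$ as well. Thus the whole lemma reduces to proving the single reverse inclusion $\bigcap_{\varepsilon > 0} E(\varepsilon) \subseteq A$, because combining it with $A \subseteq E(0) \subseteq \bigcap_{\varepsilon>0} E(\varepsilon)$ forces all three sets to coincide.

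To prove $\bigcap_{\varepsilon > 0} E(\varepsilon) \subseteq A$, I would take an arbitrary $b \in \bigcap_{\varepsilon > 0} E(\varepsilon)$ and aim to show $b \in A$. By the maximality characterization in Remark \ref{q-max}(ii), it suffices to show $q(b - c) \geq 0$ for all $c \in A$. So fix $c \in A$. The idea is to exploit the transportation formula (r3) against the pair consisting of $b$ and $c$, using that $b$ lies in $E(\varepsilon)$ for arbitrarily small $\varepsilon$. Since $c \in A \subseteq E(0)$ by (r1), and $b \in E(\varepsilon)$ for every $\varepsilon > 0$, I would apply the transportation formula to $b^1 := b$ with parameter $\varepsilon_1 := \varepsilon$, and $b^2 := c$ with parameter $\varepsilon_2 := 0$, choosing weights $\alpha_1 = \lambda$, $\alpha_2 = 1 - \lambda$ for some fixed $\lambda \in (0,1)$. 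The nonnegativity assertion of the transportation formula then yields
$$\lambda \varepsilon + \lambda(1-\lambda) q(b - c) \geq 0.$$
Dividing by $\lambda > 0$ gives $\varepsilon + (1 - \lambda) q(b - c) \geq 0$.

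Now I would drive $\varepsilon \downarrow 0$: since the inequality $\varepsilon + (1-\lambda) q(b-c) \geq 0$ holds for every $\varepsilon > 0$ (as $b \in E(\varepsilon)$ for all such $\varepsilon$), letting $\varepsilon \to 0^+$ gives $(1 - \lambda) q(b - c) \geq 0$, and since $1 - \lambda > 0$ we conclude $q(b - c) \geq 0$. As $c \in A$ was arbitrary, Remark \ref{q-max}(ii) delivers $b \in A$, completing the reverse inclusion and hence the proof. The main obstacle to watch is simply the correct bookkeeping in the transportation formula: one must verify that the formula is being invoked on a genuinely valid configuration (both points lying in the respective enlargement values, weights summing to one), and that the limiting argument in $\varepsilon$ is legitimate because the bound holds uniformly for all positive $\varepsilon$, not merely for a single one. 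This structure closely mirrors the argument already used in the proof of Proposition \ref{sm-big-E}(ii), so the technique is exactly the limiting transportation-formula trick established there.
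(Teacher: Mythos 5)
Your proof is correct: the forward inclusions from (r1) and (r2) are exactly right, and the transportation-formula computation with $b^1=b\in E(\varepsilon)$, $b^2=c\in E(0)$ and weights $\lambda,1-\lambda$ does yield $\lambda\varepsilon+\lambda(1-\lambda)q(b-c)\geq 0$, hence $q(b-c)\geq 0$ after dividing by $\lambda$ and letting $\varepsilon\downarrow 0$; Remark \ref{q-max}(ii) then gives $b\in A$. The route is, however, different in organization from the paper's. The paper disposes of the lemma in one line by sandwiching: from Proposition \ref{sm-big-E} it has $G(E)\subseteq G(E^A)$, so $\bigcap_{\varepsilon>0}E(\varepsilon)\subseteq\bigcap_{\varepsilon>0}E^A(\varepsilon)$, and the latter equals $E^A(0)=A$ directly from the definition of $E^A$ (the inequalities $q(b-c)\geq-\varepsilon$ for all $\varepsilon>0$ collapse to $q(b-c)\geq 0$) together with the characterization of maximality in Example \ref{E_A}. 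You instead re-run the limiting transportation-formula argument that the paper had already carried out inside the proof of Proposition \ref{sm-big-E}(ii). What the paper's version buys is brevity and reuse of an established maximality result; what yours buys is a self-contained argument that only invokes the axioms (r1)--(r3) and Remark \ref{q-max}(ii), without needing $E^A$ or its extremality at all. Both are valid; if you keep your version, it would be worth remarking (as you do) that the key step is literally the same trick as in Proposition \ref{sm-big-E}(ii), so one could equally well just cite that proposition.
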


\begin{proof} By using the properties (r1), (r2), Proposition
\ref{sm-big-E}, the definition of $E^A$ and Example \ref{E_A} we
get $$A\subseteq E(0)\subseteq
\bigcap_{\varepsilon>0}E(\varepsilon)\subseteq
\bigcap_{\varepsilon>0}E^A(\varepsilon)=E^A(0)=A$$ and the
conclusion follows. \end{proof}

\section{Enlargements of positive sets in Banach SSD spaces}

We start by recalling the definition of a Banach SSD space,
concept introduced by Stephen Simons, and further we study some
topological properties of enlargements of positive sets in this
framework. By following the suggestion of Heinz Bauschke we
propose calling this class of spaces \emph{Simons spaces} and hope
that Stephen agrees to this proposal.

\begin{definition}\label{banach-ssd} We say that $B$ is a \textbf{Banach SSD space (Simons space)} if
$B$ is an SSD space and $\|\cdot\|$ is a norm on $B$ with respect
to which $B$ is a Banach space with norm-dual $B^*$,
\begin{equation}\label{norm-q}\frac{1}{2}\|\cdot\|^2+q\geq 0\mbox{ on }B\end{equation}
and there exists $\iota:B\rightarrow B^*$ linear and continuous
such that \begin{equation}\label{iota} \langle
b,\iota(c)\rangle=\lfloor b,c \rfloor\mbox{ for all }b,c\in
B.\end{equation}\end{definition}

\begin{remark}\label{cont-q} (i) From \eqref{norm-q} we obtain $|\lfloor b,c\rfloor|\leq
\|\iota\|\|b\|\|c\|$, hence the functions $\lfloor \cdot,c\rfloor$
and $\lfloor b,\cdot\rfloor$ are continuous, for all $b,c\in B$.
The quadratic form $q$ is also a continuous function in this
setting, since $|q(b)-q(c)|=\frac{1}{2}|\lfloor b,b\rfloor-\lfloor
c,c\rfloor|=\frac{1}{2}|\lfloor b-c,b+c\rfloor|\leq\frac{1}{2}
\|\iota\|\|b-c\|\|b+c\|$ for all $b,c\in B$.

(ii) For a function $f:B\rightarrow \B$ we have $f^@(c)=\sup_{b\in
B}\{\langle b,\iota(c)\rangle-f(b)\}=f^*(\iota(c))$, that is
$f^@=f^*\circ\iota$ on $B$.
\end{remark}

\begin{example}\label{ex-b-ssd-sp} (a) The SSD spaces considered in
Example \ref{ex-ssd-sp} (a)-(c) are Banach SSD spaces (Simons
spaces) (see \cite[Remark 2.2]{simons-ssd}).

(b) Consider again the framework of Example \ref{ex-ssd-sp} (d),
that is $X$ is a nonzero Banach space and $B:=X\times X^*$. The
canonical embedding of $X$ into $X^{**}$ is defined by $ \
$$\widehat{}:X\rightarrow X^{**}$, $\langle
x^*,\widehat{x}\rangle:=\langle x,x^*\rangle$ for all $x\in X$ and
$ x^*\in X^*$. The dual of $B$ (with respect to the norm topology)
is $X^*\times X^{**}$ under the pairing
$$\langle b,c^*\rangle=\langle x,y^*\rangle+\langle x^*,y^{**}\rangle\mbox{ for all }b=(x,x^*)\in B,c^*=(y^*,y^{**})\in B^*.$$
Then $X\times X^*$ is a Banach SSD space (Simons space), where
$\iota:X\times X^*\rightarrow X^*\times X^{**}$,
$\iota(x,x^*):=(x^*,\widehat{x})$ for all $(x,x^*)\in X\times
X^*$. \end{example}

For $E:\R_+\rightrightarrows B$ we define
$\overline{E}:\R_+\rightrightarrows B$ by
$\overline{E}(\varepsilon):=\{b\in B:(\varepsilon,b)\in
\overline{G(E)}\}$. $E$ is said to be closed if $E=\overline{E}$.
One can see that $E$ is closed if and only if $G(E)$ is closed.
For $A\subseteq B$, consider also the subfamily $\E_c(A)=\{E\in
\E(A):E\mbox{ is closed}\}$.

\begin{proposition}\label{prop-enl-cl} Let $B$ be a Banach SSD
space (Simons space) and $A\subseteq B$ be a maximally
$q$-positive set. Then
\begin{enumerate}\item[(i)] If $E\in\E(A)$ then $\overline{E}\in
\E_c(A)$.

\item[(ii)] If $E\in\E_c(A)$ then $E(\varepsilon)$ is closed, for
all $\varepsilon\geq 0$.

\item[(iii)] $\overline{E_A}$ and $E^A$ are, respectively, the
smallest and the biggest elements in $\E_c(A)$, with respect to
the partial ordering inclusion relation of the graphs, that is
$G(\overline{E_A})\subseteq G(E)\subseteq G(E^A)$ for all $E\in
\E_c(A)$.
\end{enumerate}\end{proposition}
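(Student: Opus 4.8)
The plan is to prove the three parts in order, since (iii) will rely on (i) and (ii). For part (i), I would verify that $\overline{E}$ satisfies (r1)--(r3) and is closed. Closedness is immediate from the definition of $\overline{E}$, since $G(\overline{E})=\overline{G(E)}$. Property (r1) follows because $A\subseteq E(\varepsilon)\subseteq\overline{E}(\varepsilon)$, and (r2) should be a routine consequence of monotonicity of $E$ together with the fact that taking closures preserves inclusions (one passes to the closure of graphs restricted to each $\varepsilon$-slice). The interesting point is (r3), the transportation formula. Here I would invoke Proposition \ref{transp-f2n-conv}: since $E$ satisfies the transportation formula, $\Psi(G(E))$ is convex; because $\Psi$ is a homeomorphism (it is linear in the first coordinate and continuous, with continuous inverse, using that $q$ is continuous by Remark \ref{cont-q}(i)), the closure $\overline{\Psi(G(E))}=\Psi(\overline{G(E)})=\Psi(G(\overline{E}))$ is also convex, being the closure of a convex set. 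Applying Proposition \ref{transp-f2n-conv} in the reverse direction then yields that $\overline{E}$ satisfies the transportation formula. This is the cleanest route and avoids computing the formula directly on limits.

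For part (ii), suppose $E\in\E_c(A)$ and fix $\varepsilon\geq 0$; I want to show $E(\varepsilon)$ is closed. Take a sequence (or net) $b^n\in E(\varepsilon)$ with $b^n\to b$. Then $(\varepsilon,b^n)\in G(E)$ for all $n$, and since the first coordinate is constant, $(\varepsilon,b^n)\to(\varepsilon,b)$ in $\R\times B$. Because $G(E)$ is closed, $(\varepsilon,b)\in G(E)$, i.e. $b\in E(\varepsilon)$. Thus $E(\varepsilon)$ is closed. This part is essentially immediate from the definition of closedness of the graph.

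For part (iii), I would first show $E^A\in\E_c(A)$, which requires only that $G(E^A)$ is closed; this follows because $E^A(\varepsilon)=\{b:q(b-c)\geq-\varepsilon\ \forall c\in A\}$ and the defining conditions $q(b-c)+\varepsilon\geq 0$ are closed in $(\varepsilon,b)$ by the joint continuity of $q$ (Remark \ref{cont-q}(i)), so $G(E^A)$ is an intersection over $c\in A$ of closed sets. Since $E^A$ is already the biggest element of $\E(A)$ by Proposition \ref{sm-big-E}(ii) and $\E_c(A)\subseteq\E(A)$, it is a fortiori the biggest element of the smaller family $\E_c(A)$. For the smallest element, I would argue that $\overline{E_A}\in\E_c(A)$ by part (i) applied to $E_A\in\E(A)$. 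To see it is smallest, let $E\in\E_c(A)$ be arbitrary; then $E\in\E(A)$, so $G(E_A)\subseteq G(E)$ by Proposition \ref{sm-big-E}(ii), and taking closures gives $G(\overline{E_A})=\overline{G(E_A)}\subseteq\overline{G(E)}=G(E)$, the last equality because $E$ is closed. This establishes $G(\overline{E_A})\subseteq G(E)\subseteq G(E^A)$ for all $E\in\E_c(A)$.

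I expect the main obstacle to lie in part (i), specifically in rigorously transferring convexity through the closure operation and back through $\Psi^{-1}$. The key technical point is that $\Psi$ and $\Psi^{-1}$ are both continuous (so $\Psi$ is a homeomorphism of $\R\times B$), which requires the continuity of $q$ guaranteed in the Banach SSD setting by \eqref{norm-q} and Remark \ref{cont-q}(i); this continuity is exactly what is unavailable in a general SSD space and explains why this topological result is placed in Section 4 rather than Section 3. One should also be slightly careful to confirm that $\overline{G(E)}$ is genuinely the graph of a multifunction, i.e. that $\overline{E}$ is well-defined as a set-valued map, but this is automatic from the definition $\overline{E}(\varepsilon)=\{b:(\varepsilon,b)\in\overline{G(E)}\}$.
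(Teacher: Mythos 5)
Your proof is correct and follows the same overall decomposition as the paper's, but the key step in (i) --- transferring the transportation formula to $\overline{E}$ --- is handled by a different mechanism. The paper simply asserts that the continuity of $q$ allows one to pass to the limit directly in the two-point transportation formula; you instead invoke Proposition \ref{transp-f2n-conv} to convert the statement into convexity of $\Psi(G(E))$, observe that $\Psi$ is a homeomorphism of $\R\times B$ (by continuity of $q$), so that $\Psi(G(\overline{E}))=\overline{\Psi(G(E))}$ is convex as the closure of a convex set, and then convert back via the equivalence (i)$\Leftrightarrow$(iii) of Proposition \ref{transp-f2n-conv}. This buys a cleaner argument in which the sign condition $\varepsilon\geq 0$ is absorbed into the fact that $G(\overline{E})\subseteq\R_+\times B$, at the cost of leaning on the earlier equivalence; both routes hinge on exactly the same ingredient, the continuity of $q$ in a Banach SSD space, which you correctly identify as the reason the result lives in Section 4. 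One small imprecision: for (r2) you suggest taking closures of the individual $\varepsilon$-slices, but $\overline{E}(\varepsilon)$ is the $\varepsilon$-slice of $\overline{G(E)}$, which in general strictly contains $\overline{E(\varepsilon)}$; the routine fix is to approximate $(\varepsilon_1,b)\in\overline{G(E)}$ by $(\delta_\alpha,b^\alpha)\in G(E)$ and note that $(\delta_\alpha+\varepsilon_2-\varepsilon_1,b^\alpha)\in G(E)$ converges to $(\varepsilon_2,b)$. The remaining parts --- (ii), the closedness of $G(E^A)$ as an intersection of sets that are closed by continuity of $q$, and the minimality/maximality argument via Proposition \ref{sm-big-E} combined with taking closures of graphs --- coincide with what the paper does, and in fact supply details that the paper's very terse proof omits.
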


\begin{proof} (i) Let be $E\in\E(A)$. One can note that the continuity of the function
$q$ implies that if $E$ satisfies the transportation formula, then
$\overline{E}$ satisfies this formula, too. Further, if $E$ is
nondecreasing, then $\overline{E}$ is also nondecreasing. Hence
the first assertion follows.

(ii) The second statement of the proposition is a consequence of
the fact that $E$ is closed if and only if $G(E)$ is closed.

(iii) Employing once more the continuity of the function $q$ we
get that $G(E^A)$ is closed. Combining Proposition \ref{sm-big-E}
and Proposition \ref{prop-enl-cl} (i) we obtain
$\overline{E_A},E^A\in \E_c(A)$. The proof of the minimality,
respectively, maximality of these elements presents no difficulty.
\end{proof}

In the following we establish a one-to-one correspondence between
$\E_c(A)$ and a family of convex functions with certain
properties. This is done be extending the techniques used in
\cite[Section 3]{bu-sv-02} to Banach SSD spaces (Simons spaces).

Consider $B$ a Banach SSD space (Simons space). To $A\subseteq
B\times \R$ we associate the so-called \emph{lower envelope of
$A$} (cf. \cite{avriel}), defined as $\lev_A:B\rightarrow \B$,
$\lev_A(b)=\inf\{r\in\R:(b,r)\in A\}$. Obviously, $A\subseteq
\epi(\lev_A)$. If, additionally, $A$ is closed and has an
\emph{epigraphical structure}, that is $(b,r_1)\in A\Rightarrow
(b,r_2)\in A$ for all $r_2\in [r_1,+\infty)$, then
$A=\epi(\lev_A)$.

Let us consider now $E:\R_+\rightrightarrows B$ and define
$\lambda_E:B\rightarrow \B$, $\lambda_E(b)=\inf\{\varepsilon\geq
0:b\in E(\varepsilon)\}$. It is easy to observe that
$\lambda_E(b)=\inf\{r\in\R:(b,r)\in G(E^{-1})\}$, where $E^{-1} :B
\rightrightarrows \R_+$ is the \emph{inverse of the multifunction
$E$}. One has $G(E^{-1})=\{(b,\varepsilon):(\varepsilon,b)\in
G(E)\}$. Hence $\lambda_E$ is the lower envelope of $G(E^{-1})$.
We have $G(E^{-1})\subseteq\epi(\lambda_E)$. If $E$ is closed and
nondecreasing, then $G(E^{-1})$ is closed and has an epigraphical
structure, so in this case $G(E^{-1})=\epi(\lambda_E)$. As in
\cite[Proposition 3.1]{bu-sv-02} we obtain the following result.

\begin{proposition}\label{le} Let $B$ be a Banach SSD space (Simons space) and $E:\R_+\rightrightarrows
B$ a multifunction which is closed and nondecreasing. Then:
\begin{enumerate} \item[(i)] $G(E^{-1})=\epi(\lambda_E)$;

\item[(ii)] $\lambda_E$ is lower semicontinuous;

\item[(iii)] $\lambda_E\geq 0$;

\item[(iv)] $E(\varepsilon)=\{b\in B:
\lambda_E(b)\leq\varepsilon\}$ for all $\varepsilon\geq 0$.
\end{enumerate}

Moreover, $\lambda_E$ is the only function from $B$ to $\B$
satisfying (iii) and (iv).
\end{proposition}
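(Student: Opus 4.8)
The plan is to derive everything from statement (i), which itself rests on the general observation about lower envelopes recorded just before the proposition. First I would check that $G(E^{-1})$ is closed and has an epigraphical structure. Closedness is immediate: the coordinate swap $(\varepsilon,b)\mapsto(b,\varepsilon)$ is a homeomorphism of $\R\times B$ onto $B\times\R$ carrying $G(E)$ onto $G(E^{-1})$, and $G(E)$ is closed because $E$ is closed. For the epigraphical structure, suppose $(b,r_1)\in G(E^{-1})$ and $r_2\geq r_1$; then $b\in E(r_1)$ with $r_1\geq 0$ (since $E$ is defined on $\R_+$), so $r_2\geq 0$ as well, and as $E$ is nondecreasing we get $b\in E(r_2)$, that is $(b,r_2)\in G(E^{-1})$. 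Because $\lambda_E$ is by definition the lower envelope of $G(E^{-1})$, the cited fact that $A=\epi(\lev_A)$ whenever $A$ is closed with an epigraphical structure yields $G(E^{-1})=\epi(\lambda_E)$, which is (i).

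Statements (ii) and (iii) then follow at once. By (i) the epigraph $\epi(\lambda_E)=G(E^{-1})$ is closed, which is equivalent to lower semicontinuity of $\lambda_E$, giving (ii). For (iii), the value $\lambda_E(b)=\inf\{\varepsilon\geq 0:b\in E(\varepsilon)\}$ is an infimum taken over a subset of $\R_+$, with the convention $\inf\emptyset=+\infty$, hence $\lambda_E(b)\geq 0$ for every $b\in B$.

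For (iv) I would fix $\varepsilon\geq 0$ and chase the equivalences $b\in E(\varepsilon)\Leftrightarrow(b,\varepsilon)\in G(E^{-1})\Leftrightarrow(b,\varepsilon)\in\epi(\lambda_E)\Leftrightarrow\lambda_E(b)\leq\varepsilon$, the middle step being exactly (i); here (iii) guarantees $\epi(\lambda_E)\subseteq B\times\R_+$, so restricting to $\varepsilon\geq 0$ loses nothing. Finally, for uniqueness I would take any $g:B\rightarrow\B$ with $g\geq 0$ and $E(\varepsilon)=\{b\in B:g(b)\leq\varepsilon\}$ for all $\varepsilon\geq 0$, and compute directly from the definition of $\lambda_E$, namely $\lambda_E(b)=\inf\{\varepsilon\geq 0:b\in E(\varepsilon)\}=\inf\{\varepsilon\geq 0:g(b)\leq\varepsilon\}$. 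Since $g(b)\geq 0$, this last infimum equals $g(b)$ both when $g(b)$ is finite and when $g(b)=+\infty$, whence $g=\lambda_E$.

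There is no genuinely hard step here; the only place that demands care is the proof of (i), precisely the joint verification that $G(E^{-1})$ is closed and has an epigraphical structure, since these two properties are exactly what the closedness and nondecreasing hypotheses on $E$ are there to supply. Once (i) is in hand, the remaining items reduce to bookkeeping with epigraphs and infima.
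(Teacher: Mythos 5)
Your proof is correct and follows exactly the route the paper intends: the paper gives no separate proof, instead deferring to the lower-envelope discussion in the paragraph immediately preceding the proposition (closedness plus epigraphical structure of $G(E^{-1})$ yields $G(E^{-1})=\epi(\lambda_E)$) together with the citation of Burachik--Svaiter, and your argument is precisely the careful filling-in of those steps. The derivations of (ii)--(iv) and of uniqueness from (i) are all sound.
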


Given $E:\R_+\rightrightarrows B$, we define the function
$\Lambda_E:B\rightarrow\B$, $\Lambda_E:=\lambda_E+q$. Let us
notice that $\Lambda_E$ is the lower envelope of $\Psi(G(E^{-1}))$
(the function $\Psi$ was defined in Proposition
\ref{transp-f2n-conv}) and
$\epi(\Lambda_E)=\Psi(\epi(\lambda_E))$. From these observations,
Proposition \ref{le} (i) and Proposition \ref{transp-f2n-conv} we
obtain the following result.

\begin{corollary}\label{tr-conv-Lambda} Let $B$ be  Banach SSD
space (Simons space) and $E:\R_+\rightrightarrows B$ a closed and
nondecreasing enlargement of the maximally $q$-positive set
$A\subseteq B$. Then $E\in \E(A)$ if and only if $\Lambda_E$ is
convex.
\end{corollary}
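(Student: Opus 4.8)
The plan is to reduce the two-sided statement to a chain of equivalences built on the results already established. Since $E$ is assumed to be a closed and nondecreasing enlargement of $A$, properties (r1) and (r2) in Definition \ref{fam-enl} hold by hypothesis; consequently the membership $E\in\E(A)$ is equivalent to the single requirement that $E$ satisfy the transportation formula (r3). Thus the corollary amounts to proving that $E$ satisfies the transportation formula if and only if $\Lambda_E$ is convex.

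For the first half of this equivalence I would invoke Proposition \ref{transp-f2n-conv}: $E$ satisfies the transportation formula precisely when $\Psi(G(E))$ is convex. It then remains to connect the convexity of $\Psi(G(E))$ with that of $\Lambda_E$. Here I would use the observations recorded just before the statement, namely that $\epi(\Lambda_E)=\Psi(\epi(\lambda_E))$, together with Proposition \ref{le} (i), which, thanks to $E$ being closed and nondecreasing, gives $\epi(\lambda_E)=G(E^{-1})$. Combining these yields $\epi(\Lambda_E)=\Psi(G(E^{-1}))$, so that $\Lambda_E$ is convex if and only if $\Psi(G(E^{-1}))$ is convex.

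To close the loop I would note that $G(E^{-1})$ is obtained from $G(E)$ by interchanging the two coordinates, an operation realized by a linear homeomorphism of the product space that carries $\Psi(G(E))$ onto $\Psi(G(E^{-1}))$ after the corresponding relabeling of $\Psi$. Since such a map preserves convexity, $\Psi(G(E))$ is convex if and only if $\Psi(G(E^{-1}))$ is convex. Stringing the equivalences together gives $E\in\E(A)$ $\iff$ $E$ satisfies (r3) $\iff$ $\Psi(G(E))$ convex $\iff$ $\Psi(G(E^{-1}))$ convex $\iff$ $\epi(\Lambda_E)$ convex $\iff$ $\Lambda_E$ convex, which is the assertion. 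As the argument is a sequence of genuine equivalences, both implications of the corollary are obtained at once.

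Finally, the only delicate point to get right is the bookkeeping of the coordinate order: the map $\Psi$ was introduced on $\R\times B$ in Proposition \ref{transp-f2n-conv}, whereas the identity $\epi(\Lambda_E)=\Psi(\epi(\lambda_E))$ uses its natural counterpart on $B\times\R$. I expect this to be the main, though minor, obstacle, and it is resolved exactly by the coordinate-swap homeomorphism above, which preserves convexity and intertwines the two versions of $\Psi$; everything else is a direct citation of the preceding results.
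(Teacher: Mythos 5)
Your proposal is correct and follows essentially the same route as the paper: the paper derives the corollary directly from the identities $\epi(\Lambda_E)=\Psi(\epi(\lambda_E))=\Psi(G(E^{-1}))$ (via Proposition \ref{le} (i)) combined with Proposition \ref{transp-f2n-conv}. Your explicit handling of the coordinate-swap between the $\R\times B$ and $B\times\R$ versions of $\Psi$ is a detail the paper leaves implicit, but it is the same argument.
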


\begin{proposition}\label{pr} Let $B$ be a Banach SSD space (Simons space), $A\subseteq B$ a
maximally $q$-positive set and $E\in \E_c(A)$. Then $\Lambda_E$ is
convex, lower semicontinuous, $\Lambda_E\geq q$ on $B$ and
$A\subseteq  {\cal P}(\Lambda_E)$.\end{proposition}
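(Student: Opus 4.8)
The plan is to observe that all four assertions follow almost immediately by quoting the results already assembled in this section, once the hypotheses of those results are verified. Membership $E\in\E_c(A)$ means, by definition, that $E\in\E(A)$ and that $E$ is closed; in particular $E$ is a closed, nondecreasing (property (r2)) enlargement of the maximally $q$-positive set $A$. This is exactly the setting in which Proposition \ref{le} and Corollary \ref{tr-conv-Lambda} apply, so I would first record this observation and then treat the four claims separately.

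For convexity of $\Lambda_E$ I would simply invoke Corollary \ref{tr-conv-Lambda}: since $E$ is a closed, nondecreasing enlargement of $A$ and $E\in\E(A)$, that corollary yields at once that $\Lambda_E$ is convex. For lower semicontinuity I would write $\Lambda_E=\lambda_E+q$ and use that $\lambda_E$ is lower semicontinuous by Proposition \ref{le}(ii) while $q$ is continuous by Remark \ref{cont-q}(i); the sum of a lower semicontinuous function and a continuous function is lower semicontinuous. The inequality $\Lambda_E\geq q$ on $B$ is immediate from $\lambda_E\geq 0$ (Proposition \ref{le}(iii)), since then $\Lambda_E=\lambda_E+q\geq q$.

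The only point carrying any content is the inclusion $A\subseteq{\cal P}(\Lambda_E)$, and even this reduces to checking that $\lambda_E$ vanishes on $A$. Fix $b\in A$. Property (r1) of Definition \ref{fam-enl} gives $A\subseteq E(\varepsilon)$ for every $\varepsilon\geq 0$, in particular $b\in E(0)$, so that $\lambda_E(b)=\inf\{\varepsilon\geq 0:b\in E(\varepsilon)\}\leq 0$; combined with $\lambda_E\geq 0$ from Proposition \ref{le}(iii) this forces $\lambda_E(b)=0$. Hence $\Lambda_E(b)=\lambda_E(b)+q(b)=q(b)$, i.e. $b\in{\cal P}(\Lambda_E)$, which is the desired inclusion.

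I do not anticipate a genuine obstacle here: the statement is a bookkeeping corollary that bundles together the convexity criterion of Corollary \ref{tr-conv-Lambda}, the regularity properties of $\lambda_E$ from Proposition \ref{le}, the continuity of $q$, and the defining enlargement property (r1). The single thing one must be careful about is to confirm that $E$ is both closed and nondecreasing before citing Proposition \ref{le} and Corollary \ref{tr-conv-Lambda}, and this is precisely what the hypothesis $E\in\E_c(A)$ guarantees.
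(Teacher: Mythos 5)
Your proposal is correct and follows essentially the same route as the paper's proof: convexity from Corollary \ref{tr-conv-Lambda}, lower semicontinuity and the bound $\Lambda_E\geq q$ from Proposition \ref{le}(ii)--(iii) together with the continuity of $q$, and the inclusion $A\subseteq{\cal P}(\Lambda_E)$ by noting $b\in E(0)$ forces $\lambda_E(b)=0$. You merely spell out the verification of the hypotheses in more detail than the paper does.
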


\begin{proof} The first three assertions follow from Corollary
\ref{tr-conv-Lambda} and Proposition \ref{le} (ii) and (iii). Take
an arbitrary $b\in A$. Since $E$ is an enlargement of $A$ we get
$b\in E(0)$, hence $\lambda_E(b)=0$ and the conclusion follows.
\end{proof}

\noindent To every maximally $q$-positive set we introduce the
following family of convex functions.

\begin{definition}\label{def-repr} Let $B$ be a Banach SSD space (Simons space) and $A\subseteq B$ be a
maximally $q$-positive set. We define ${\cal H}(A)$ as the family
of convex lower semicontinuous functions $h:B\rightarrow\B$ such
that $$h\geq q\mbox{ on }B\mbox{ and }A\subseteq {\cal
P}(h).$$\end{definition}

\begin{remark}\label{one-to-one} Combining Proposition \ref{pr} and Proposition
\ref{le} (i) we obtain that the map $E\mapsto\Lambda_E$ is
one-to-one from $\E_c(A)$ to ${\cal H}(A)$.\end{remark}

For $h\in {\cal H}(A)$ we define the multifunction
$A_h:\R_+\rightrightarrows B$, $$A_h(\varepsilon):=\{b\in
B:h(b)\leq\varepsilon+q(b)\}\mbox{ for all }\varepsilon\geq 0.$$

\begin{proposition}\label{A_h} Let $B$ be a Banach SSD space (Simons space) and $A\subseteq B$ be a
maximally $q$-positive set. If $h\in{\cal H}(A)$, then
$A_h\in\E_c(A)$ and $\Lambda_{A_h}=h$.\end{proposition}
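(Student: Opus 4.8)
The plan is to check directly the three defining properties (r1)--(r3) of $\E(A)$ together with closedness of $A_h$, and along the way to identify $\Lambda_{A_h}$ with $h$. The key device is that the genuinely laborious condition, the transportation formula (r3), need not be verified by hand: once $A_h$ is known to be a closed, nondecreasing enlargement of $A$, Corollary~\ref{tr-conv-Lambda} reduces (r3) to the convexity of $\Lambda_{A_h}$, and this convexity will be immediate once we show $\Lambda_{A_h}=h$.

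First I would dispose of the elementary properties. For (r1), any $b\in A$ lies in ${\cal P}(h)$, so $h(b)=q(b)\leq\varepsilon+q(b)$ for every $\varepsilon\geq 0$, giving $b\in A_h(\varepsilon)$ and hence $A\subseteq A_h(\varepsilon)$ for all $\varepsilon\geq 0$. Property (r2) is immediate: if $0\leq\varepsilon_1\leq\varepsilon_2$ and $h(b)\leq\varepsilon_1+q(b)$, then $h(b)\leq\varepsilon_2+q(b)$. For closedness I would observe that $G(A_h)=\{(\varepsilon,b)\in\R_+\times B:(h-q)(b)\leq\varepsilon\}$; since $h$ is lower semicontinuous and $q$ is continuous by Remark~\ref{cont-q}~(i), the function $h-q$ is lower semicontinuous, so this set (the epigraph of $h-q$ up to swapping the two coordinates) is closed, whence $G(A_h)$ is closed and $A_h$ is closed.

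Next I would compute $\lambda_{A_h}$ straight from its definition. As $h\geq q$ on $B$, the value $(h-q)(b)$ is nonnegative (and equals $+\infty$ exactly when $h(b)=+\infty$), so the constraint $\varepsilon\geq 0$ is automatically satisfied at the infimum and $\lambda_{A_h}(b)=\inf\{\varepsilon\geq 0:(h-q)(b)\leq\varepsilon\}=(h-q)(b)$ for every $b\in B$. Consequently $\Lambda_{A_h}=\lambda_{A_h}+q=h$, which is the second assertion; in particular $\Lambda_{A_h}$ is convex, being equal to the convex function $h$.

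Finally, since $A_h$ is a closed and nondecreasing enlargement of the maximally $q$-positive set $A$ with $\Lambda_{A_h}$ convex, Corollary~\ref{tr-conv-Lambda} gives at once that $A_h$ satisfies the transportation formula, so $A_h\in\E(A)$; combined with closedness this yields $A_h\in\E_c(A)$. I do not anticipate a real obstacle: the only step needing slight care is the infimum computation, where the hypothesis $h\geq q$ ensures that restricting to $\varepsilon\geq 0$ leaves the value $(h-q)(b)$ unchanged. The merit of the argument is precisely that the transportation formula, the one property that would be tedious to establish combinatorially, is obtained for free from the convexity of $h$ via Corollary~\ref{tr-conv-Lambda}.
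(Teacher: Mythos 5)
Your proof is correct and follows essentially the same route as the paper: verify that $A_h$ is a closed, nondecreasing enlargement, identify $\Lambda_{A_h}$ with $h$, and then obtain the transportation formula from the convexity of $h$ via Corollary~\ref{tr-conv-Lambda}. The only cosmetic difference is that you compute $\lambda_{A_h}=h-q$ directly from the infimum, whereas the paper deduces it from the uniqueness statement in Proposition~\ref{le}; both are valid.
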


\begin{proof} Take an arbitrary $h\in{\cal H}(A)$. The properties
of the function $h$ imply that $A_h$ is a closed enlargement of
$A$. Obviously $A_h$ is nondecreasing. Trivially,
$A_h(\varepsilon)=\{b\in B:l(b)\leq\varepsilon\}$, where
$l:B\rightarrow\B$, $l:=h-q$.  By Proposition \ref{le} we get
$\lambda_{A_h}=l$, implying $\Lambda_{A_h}=h$. The convexity of
$h$ and Corollary \ref{tr-conv-Lambda} guarantee that
$A_h\in\E_c(A)$. \end{proof}

As a consequence of the above results we obtain a bijection
between the family of closed enlargements (which satisfy condition
(r1)-(r3) from Definition \ref{fam-enl}) associated to a maximally
positive set and the family of convex functions introduced in
Definition \ref{def-repr}.

\begin{theorem}\label{bijection} Let $B$ be a Banach SSD space (Simons space) and $A\subseteq B$ be a
maximally $q$-positive set. The map $$\E_c(A)\rightarrow {\cal
H}(A),$$$$E\mapsto\Lambda_E$$ is a bijection, with inverse given
by $${\cal H}(A)\rightarrow \E_c(A),$$$$h\mapsto A_h.$$ Moreover,
$A_{\Lambda_E}=E$ for all $E\in \E_c(A)$ and $\Lambda_{A_h}=h$ for
all $h\in {\cal H}(A)$.
\end{theorem}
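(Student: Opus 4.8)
The plan is to establish that the two maps are mutually inverse bijections by assembling the pieces already proved. First I would invoke Remark~\ref{one-to-one}, which (via Proposition~\ref{pr} and Proposition~\ref{le}(i)) tells us that $E\mapsto\Lambda_E$ is a well-defined injective map from $\E_c(A)$ into ${\cal H}(A)$. Next I would invoke Proposition~\ref{A_h}, which shows that for every $h\in{\cal H}(A)$ the multifunction $A_h$ lies in $\E_c(A)$, so $h\mapsto A_h$ is a well-defined map from ${\cal H}(A)$ into $\E_c(A)$; the same proposition already records the identity $\Lambda_{A_h}=h$ for all $h\in{\cal H}(A)$. This last identity says exactly that $E\mapsto\Lambda_E$ composed with $h\mapsto A_h$ (in the order "first $A_h$, then $\Lambda$") is the identity on ${\cal H}(A)$.

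The remaining and genuinely new part is to prove the reverse composition, namely $A_{\Lambda_E}=E$ for every $E\in\E_c(A)$; this is the one identity not literally contained in the preceding results. For a fixed $E\in\E_c(A)$, I would compute $A_{\Lambda_E}(\varepsilon)$ directly from its definition: by definition $A_{\Lambda_E}(\varepsilon)=\{b\in B:\Lambda_E(b)\leq\varepsilon+q(b)\}$, and since $\Lambda_E=\lambda_E+q$ by definition, the defining inequality $\Lambda_E(b)\le\varepsilon+q(b)$ collapses to $\lambda_E(b)\le\varepsilon$ (the $q(b)$ terms cancel). Because $E$ is closed and nondecreasing, Proposition~\ref{le}(iv) gives $E(\varepsilon)=\{b\in B:\lambda_E(b)\leq\varepsilon\}$, so $A_{\Lambda_E}(\varepsilon)=E(\varepsilon)$ for every $\varepsilon\geq 0$, whence $A_{\Lambda_E}=E$.

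With both composition identities in hand, the bijection statement is immediate: the map $h\mapsto A_h$ is a two-sided inverse of $E\mapsto\Lambda_E$, so each is a bijection and they are mutually inverse. I do not anticipate a serious obstacle here; the only point requiring care is verifying that $A_{\Lambda_E}$ is genuinely the same multifunction as $E$ rather than merely agreeing on graphs, but since two multifunctions $\R_+\rightrightarrows B$ coincide precisely when their images $E(\varepsilon)$ agree for every $\varepsilon\geq 0$, the level-set computation above settles this. The key enabling facts are the cancellation of $q$ in the definition of $A_{\Lambda_E}$ and the explicit level-set description of $E$ furnished by Proposition~\ref{le}(iv), both of which rely on $E$ being closed and nondecreasing, properties guaranteed by membership in $\E_c(A)$.
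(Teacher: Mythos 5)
Your proof is correct and follows essentially the route the paper intends: the theorem is stated there as an immediate consequence of Remark \ref{one-to-one} and Proposition \ref{A_h}, with no further argument written out. Your explicit verification that $A_{\Lambda_E}=E$ via the cancellation of $q$ and Proposition \ref{le}(iv) fills in the one step the paper leaves implicit (it could alternatively be deduced from injectivity together with $\Lambda_{A_{\Lambda_E}}=\Lambda_E$), and it is sound.
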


The following corollary shows that there exists a closed
connection between an element $h\in {\cal H}(A)$ and the maximally
positive set $A$.

\begin{corollary}\label{repr} Let $B$ be a Banach SSD space (Simons space) and $A\subseteq B$ be a
maximally $q$-positive set. Take $h\in{\cal H}(A)$. Then $A={\cal
P}(h)$.\end{corollary}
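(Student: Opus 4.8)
The plan is to exploit the bijection just established, since membership $h\in{\cal H}(A)$ already delivers the inclusion $A\subseteq{\cal P}(h)$ for free; thus only the reverse inclusion ${\cal P}(h)\subseteq A$ remains to be proved. The idea is to recast the set ${\cal P}(h)$ in terms of the enlargement $A_h$ associated to $h$ and then to appeal to the structural results already available for enlargements of a maximally $q$-positive set. First I would write down explicitly that $A_h(0)=\{b\in B:h(b)\leq q(b)\}$, while by definition ${\cal P}(h)=\{b\in B:h(b)=q(b)\}$. Because $h\in{\cal H}(A)$ satisfies $h\geq q$ on all of $B$, the inequality $h(b)\leq q(b)$ forces $h(b)=q(b)$, and the converse is trivial; hence ${\cal P}(h)=A_h(0)$. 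This identification is the conceptual heart of the argument.

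Next I would invoke Proposition \ref{A_h}, which guarantees that $A_h\in\E_c(A)$, and in particular $A_h\in\E(A)$. Since $A$ is maximally $q$-positive, Lemma \ref{gen-prop-enl} then applies to the enlargement $A_h$ and yields $A_h(0)=A$. Combining this with the identity ${\cal P}(h)=A_h(0)$ obtained in the previous step gives ${\cal P}(h)=A$, which is precisely the assertion of the corollary.

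I do not expect a serious obstacle here: once the identity ${\cal P}(h)=A_h(0)$ is observed, the conclusion is a direct consequence of Proposition \ref{A_h} and Lemma \ref{gen-prop-enl}. The only point requiring a little care is the use of $h\geq q$ to upgrade the defining inequality of $A_h(0)$ to the defining equality of ${\cal P}(h)$. Should one wish to avoid the enlargement machinery altogether, one could instead argue directly through maximality via Remark \ref{q-max}(ii), by showing that any $b$ with $h(b)=q(b)$ satisfies $q(b-c)\geq 0$ for every $c\in A$; however, this alternative route is less transparent, and the enlargement-based argument is cleaner given the results already at hand.
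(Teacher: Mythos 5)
Your argument is correct and follows essentially the same route as the paper: identify ${\cal P}(h)$ with $A_h(0)$ (the paper only needs the inclusion ${\cal P}(h)\subseteq A_h(0)$, which is immediate), invoke Proposition \ref{A_h} (equivalently Theorem \ref{bijection}) to get $A_h\in\E_c(A)$, and conclude via Lemma \ref{gen-prop-enl} that $A_h(0)=A$. Your explicit observation that $h\geq q$ upgrades the inequality defining $A_h(0)$ to the equality defining ${\cal P}(h)$ is a minor, harmless elaboration of the same proof.
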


\begin{proof} We have $A\subseteq {\cal
P}(h)$ by the definition of ${\cal H}(A)$. Take an arbitrary $b\in
{\cal P}(h)$. Define $E:=A_h$. Then $b\in E(0)$. Applying Theorem
\ref{bijection} we get $E\in \E_c(A)$. Further, by Lemma
\ref{gen-prop-enl} we have $E(0)=A$, hence $b\in A$ and the proof
is complete. \end{proof}

\begin{remark} In what follows, we call an arbitrary element $h$ of ${\cal
H}(A)$ a \textbf{representative functions} of $A$. The word
"representative" is justified by Corollary \ref{repr}. Since for
$A$ a $q$-positive set, we have $A\neq\emptyset$ (see Definition
\ref{ssd} (ii)), every representative function of $A$ is proper.
\end{remark}

\begin{corollary}\label{ineq} Let $B$ be a Banach SSD space (Simons space) and $A\subseteq B$ be a
maximally $q$-positive set. Take $E\in \E_c(A)$ and $b^1\in
E(\varepsilon_1),b^2\in E(\varepsilon_2)$, where
$\varepsilon_1,\varepsilon_2\geq 0$ are arbitrary. Then
$$q(b^1-b^2)\geq - ({\sqrt{\varepsilon_1}}+{\sqrt{\varepsilon_2}})^2.$$\end{corollary}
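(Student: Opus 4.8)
The plan is to read the inequality straight off the transportation formula, which $E$ fulfils because $\E_c(A)\subseteq\E(A)$ and hence $E$ satisfies (r3). First I would fix weights $\alpha_1,\alpha_2>0$ with $\alpha_1+\alpha_2=1$ and apply Definition \ref{transp-f2} to the points $b^1\in E(\varepsilon_1)$ and $b^2\in E(\varepsilon_2)$. The crucial observation is that the formula does not merely assert $\alpha_1 b^1+\alpha_2 b^2\in E(\varepsilon)$; it also guarantees the nonnegativity of
$$\varepsilon=\alpha_1\varepsilon_1+\alpha_2\varepsilon_2+\alpha_1\alpha_2 q(b^1-b^2),$$
and it is precisely this clause that I would exploit. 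Rearranging and dividing by $\alpha_1\alpha_2>0$ yields
$$q(b^1-b^2)\geq -\frac{\varepsilon_1}{\alpha_2}-\frac{\varepsilon_2}{\alpha_1}$$
for every admissible pair $(\alpha_1,\alpha_2)$, a whole family of lower bounds on $q(b^1-b^2)$.

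Next I would optimize the right-hand side over the weights. Assuming first that $\varepsilon_1,\varepsilon_2>0$, I would choose
$$\alpha_1=\frac{\sqrt{\varepsilon_2}}{\sqrt{\varepsilon_1}+\sqrt{\varepsilon_2}},\qquad \alpha_2=\frac{\sqrt{\varepsilon_1}}{\sqrt{\varepsilon_1}+\sqrt{\varepsilon_2}},$$
which indeed lie in $(0,1)$ and sum to $1$, and which minimize $\frac{\varepsilon_1}{\alpha_2}+\frac{\varepsilon_2}{\alpha_1}$ (as one sees from the Cauchy--Schwarz estimate $\frac{\varepsilon_1}{\alpha_2}+\frac{\varepsilon_2}{\alpha_1}\geq(\sqrt{\varepsilon_1}+\sqrt{\varepsilon_2})^2$). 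A one-line computation then gives $\frac{\varepsilon_1}{\alpha_2}+\frac{\varepsilon_2}{\alpha_1}=(\sqrt{\varepsilon_1}+\sqrt{\varepsilon_2})^2$, which is exactly the asserted bound.

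It remains to treat the boundary cases in which one or both of the $\varepsilon_i$ vanish, and here I would invoke the nondecreasingness (r2). If, say, $\varepsilon_1=0$, then $b^1\in E(0)\subseteq E(\delta)$ for every $\delta>0$, so the case already settled yields $q(b^1-b^2)\geq -(\sqrt{\delta}+\sqrt{\varepsilon_2})^2$; letting $\delta\downarrow 0$ gives the claim, and the situation $\varepsilon_2=0$ is symmetric. When $\varepsilon_1=\varepsilon_2=0$ the bound reduces to $q(b^1-b^2)\geq 0$, which is immediate since $E(0)=A$ by Lemma \ref{gen-prop-enl} and $A$ is $q$-positive. I expect no genuine obstacle here: the argument rests entirely on interpreting the nonnegativity clause of the transportation formula as a parametrized lower bound on $q(b^1-b^2)$, and the only substantive step is the optimal choice of the convex weights $\alpha_1,\alpha_2$. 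Note, incidentally, that closedness of $E$ is never used — only properties (r2) and (r3) enter.
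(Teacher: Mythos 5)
Your argument is correct, but it is genuinely different from the one in the paper. The paper's proof goes through Theorem \ref{bijection}: it writes $E=A_h$ for a representative function $h\in{\cal H}(A)$ and then invokes an external result of Simons (\cite[Lemma 1.6]{simons-ssd}), which gives $-q(b^1-b^2)\leq\big[\sqrt{(h-q)(b^1)}+\sqrt{(h-q)(b^2)}\big]^2$ for convex $h\geq q$; the definition of $A_h$ then bounds $(h-q)(b^i)$ by $\varepsilon_i$. Your proof instead reads the bound directly off the nonnegativity clause of the transportation formula (r3), obtaining the parametrized family $q(b^1-b^2)\geq-\varepsilon_1/\alpha_2-\varepsilon_2/\alpha_1$ and optimizing the weights; the computation with $\alpha_1=\sqrt{\varepsilon_2}/(\sqrt{\varepsilon_1}+\sqrt{\varepsilon_2})$ is exact, and your treatment of the degenerate cases via (r2) (or, even more directly, by letting $\alpha_1\to 1$ or $\alpha_1\to 0$ in the family of bounds) is sound. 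What your route buys is twofold: it is self-contained, requiring no machinery beyond Definition \ref{transp-f2}, and, as you observe, it never uses closedness, so it actually establishes the inequality for every $E\in\E(A)$, not only for $E\in\E_c(A)$ --- a strictly more general conclusion than the one stated. What the paper's route buys is a uniform viewpoint: the same representative-function formalism that drives Theorem \ref{bijection}, Corollary \ref{repr-min-max} and Theorem \ref{char-add} also yields this estimate as a one-line corollary, at the price of restricting to closed enlargements and leaning on Simons' lemma.
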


\begin{proof} By Theorem \ref{bijection}, there exists a representative function $h\in{\cal
H}(A)$ such that $E=A_h$. By using the definition of $A_h$ and
applying \cite[Lemma 1.6]{simons-ssd} we obtain
$$-q(b^1-b^2)\leq\big[\sqrt{(h-q)(b^1)}+\sqrt{(h-q)(b^2)}\big]^2\leq
(\sqrt{\varepsilon_1}+\sqrt{\varepsilon_2})^2$$ and the proof is
complete. \end{proof}

\begin{remark}\label{ineq-bu-sv} The above lower bound is
established in \cite[Corollary 3.12]{bu-sv-99}, where $B$ is taken
as in Example \ref{ex-b-ssd-sp} (b), for the enlargement $B^A$
(see Example \ref{E_A}). Here we generalize this result to Banach
SSD spaces (Simons spaces) and to an arbitrary $E\in
\E_c(A)$.\end{remark}

In what follows we investigate the properties of the functions
$\Lambda_{E^A}, \Lambda_{\overline{E_A}}$ and rediscover in this
way the functions introduced and studied by S. Simons in
\cite{simons-ssd, simons-ssd-jca, simons} (see Proposition
\ref{furth-prop-fitz} (iii) from below).

\begin{corollary}\label{repr-min-max} Let $B$ be a Banach SSD space (Simons space) and $A\subseteq B$ be a
maximally $q$-positive set.\begin{enumerate} \item[(i)] The
functions $\Lambda_{E^A}, \Lambda_{\overline{E_A}}\in {\cal H}(A)$
and are respectively the minimum and the maximum of this family,
that is \begin{equation}\label{ineq-fam1}\Lambda_{E^A}\leq h\leq
\Lambda_{\overline{E_A}}\mbox{ for all }h\in {\cal
H}(A).\end{equation}

\item[(ii)] Conversely, if $h:B\rightarrow \B$ is a convex lower
semicontinuous function such that
\begin{equation}\label{ineq-fam2}\Lambda_{E^A}\leq h\leq
\Lambda_{\overline{E_A}},\end{equation} then $h\in {\cal H}(A).$

\item[(iii)] It holds ${\cal H}(A)=\{h:B\rightarrow\B: h \mbox{
convex, lower semicontinuous and }\Lambda_{E^A}\leq h\leq
\Lambda_{\overline{E_A}}\}$.\end{enumerate}
\end{corollary}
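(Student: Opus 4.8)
The plan is to deduce the entire statement from two facts established above: the bijection $E\mapsto\Lambda_E$ between $\E_c(A)$ and ${\cal H}(A)$ (Theorem \ref{bijection}) and the identification of $E^A$ and $\overline{E_A}$ as the biggest and smallest elements of $\E_c(A)$ (Proposition \ref{prop-enl-cl} (iii)). The key observation I would isolate first is that this bijection reverses the graph-inclusion order: if $E_1,E_2\in\E_c(A)$ satisfy $G(E_1)\subseteq G(E_2)$, then $\Lambda_{E_2}\leq\Lambda_{E_1}$. This is immediate from Proposition \ref{le} (iv), since $G(E_1)\subseteq G(E_2)$ forces $E_1(\varepsilon)\subseteq E_2(\varepsilon)$ for all $\varepsilon\geq 0$, i.e. the sublevel sets of $\lambda_{E_1}$ are contained in those of $\lambda_{E_2}$; hence $\lambda_{E_2}\leq\lambda_{E_1}$ pointwise, and adding $q$ yields $\Lambda_{E_2}\leq\Lambda_{E_1}$.

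For (i), Proposition \ref{prop-enl-cl} (iii) gives $E^A,\overline{E_A}\in\E_c(A)$, so Theorem \ref{bijection} places $\Lambda_{E^A},\Lambda_{\overline{E_A}}$ in ${\cal H}(A)$. Applying the order-reversing observation to the chain $G(\overline{E_A})\subseteq G(E)\subseteq G(E^A)$, valid for every $E\in\E_c(A)$, yields $\Lambda_{E^A}\leq\Lambda_E\leq\Lambda_{\overline{E_A}}$. Since $E\mapsto\Lambda_E$ is onto ${\cal H}(A)$ (every $h\in{\cal H}(A)$ equals $\Lambda_{A_h}$ by Theorem \ref{bijection}), this is precisely $\Lambda_{E^A}\leq h\leq\Lambda_{\overline{E_A}}$ for all $h\in{\cal H}(A)$, which exhibits $\Lambda_{E^A}$ and $\Lambda_{\overline{E_A}}$ as the minimum and maximum of the family.

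For (ii), I would take a convex lower semicontinuous $h$ with $\Lambda_{E^A}\leq h\leq\Lambda_{\overline{E_A}}$ and verify the two defining conditions of ${\cal H}(A)$. The inequality $h\geq q$ on $B$ follows from $h\geq\Lambda_{E^A}\geq q$, the last bound holding because $\Lambda_{E^A}\in{\cal H}(A)$. For $A\subseteq{\cal P}(h)$, fix $b\in A$; as both $\Lambda_{E^A}$ and $\Lambda_{\overline{E_A}}$ lie in ${\cal H}(A)$, we have $\Lambda_{E^A}(b)=\Lambda_{\overline{E_A}}(b)=q(b)$, so the bounds squeeze $q(b)\leq h(b)\leq q(b)$, forcing $h(b)=q(b)$. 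Hence $h\in{\cal H}(A)$.

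Finally, (iii) is just the conjunction of (i) and (ii): the inclusion of ${\cal H}(A)$ in the displayed set is (i) together with the convexity and lower semicontinuity built into the definition of ${\cal H}(A)$, and the reverse inclusion is (ii). There is no serious analytic obstacle here---the argument is a chain of definitional checks and squeezings---and the only step needing a moment's care is the order-reversal, which Proposition \ref{le} (iv) reduces to a sublevel-set comparison.
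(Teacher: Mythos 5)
Your proposal is correct and follows essentially the same route as the paper: part (i) from the bijection of Theorem \ref{bijection} together with the extremality of $\overline{E_A}$ and $E^A$ in $\E_c(A)$ (Proposition \ref{prop-enl-cl} (iii)), part (ii) by the same squeeze at points of $A$ using $\Lambda_{E^A},\Lambda_{\overline{E_A}}\in{\cal H}(A)$, and part (iii) as the conjunction. The only difference is that you spell out the order-reversal of $E\mapsto\Lambda_E$ (via the sublevel-set comparison from Proposition \ref{le} (iv)), which the paper leaves implicit in its ``follows immediately''; that step is stated and justified correctly.
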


\begin{proof} (i) This follows immediately from Theorem
\ref{bijection} and Proposition \ref{prop-enl-cl}.

(ii) If $h:B\rightarrow \B$ is a convex lower semicontinuous
function satisfying \eqref{ineq-fam2}, then (since
$\Lambda_{E^A}\in {\cal H}(A)$)
\begin{equation}\label{ineq-fam3}h\geq \Lambda_{E^A}\geq q\mbox{
on }B.\end{equation} Further, for $b\in A$ we obtain (employing
that $\Lambda_{\overline{E_A}}\in {\cal H}(A)$) that $h(b)\leq
\Lambda_{\overline{E_A}}(b)=q(b)$. In view of \eqref{ineq-fam3} it
follows that $b\in{\cal P}(h)$, hence $h\in {\cal H}(A)$.

(iii) This characterization of ${\cal H}(A)$ is a direct
consequence of (i) and (ii). \end{proof}

\begin{definition}\label{fitz-f} (cf. \cite{simons-ssd}) Let $B$ be a Banach SSD space (Simons space) and
$A\subseteq B$ be a $q$-positive set. We define the function
$\Theta_A: B^*\rightarrow \B$, $$\Theta_A(b^*):=\sup_{a\in
A}[\langle a,b^*\rangle-q(a)]\mbox{ for all }b^*\in B^*.$$ We
define the function $\Phi_A:B\rightarrow\B$,
$$\Phi_A:=\Theta_A\circ\iota$$ and also the function
$^*\Theta_A:B\rightarrow\B$, $$^*\Theta_A(c):=\sup_{b^*\in
B^*}[\langle c,b^*\rangle-\Theta_A(b^*)]\mbox{ for all }c\in B.$$
\end{definition}

We denote by $a\vee b$ the maximum value between $a,b\in\B$. The
following properties of the functions defined above appear in
\cite[Lemma 2.13 and Theorem 2.16]{simons-ssd}. The property (vii)
is a direct consequence of (i)-(vi).

\begin{lemma}\label{propr-fitz} Let $B$ be a Banach SSD space (Simons space) and
$A\subseteq B$ a $q$-positive set. Then:

(i) For all $b\in B$, $\Phi_A(b)=\sup_{a\in A}[\lfloor
a,b\rfloor-q(a)]=q(b)-\inf_{c\in A} q(b-c)$.

(ii) $\Phi_A$ is proper, convex, lower semicontinuous and
$A\subseteq{\cal P}(\Phi_A)$.

(iii) $(^*\Theta_A)^*=\Theta_A$ and $(^*\Theta_A)^@=\Phi_A$.

(iv) $^*\Theta_A$ is proper, convex, lower semicontinuous,
$^*\Theta_A\geq \Phi_A^{@}\geq \Phi_A\vee q$ on $B$ and
$$^*\Theta_A=\Phi_A^@=q\mbox{ on }A.$$

(v) $^*\Theta_A=\sup\{h:B\rightarrow\B: h\mbox{ proper, convex,
lower semicontinuous, }h\leq q\mbox{ on }A\}$.

If, additionally, $A$ is maximally $q$-positive, then:

(vi) $^*\Theta_A\geq \Phi_A^{@}\geq \Phi_A\geq q$ on $B$ and
$A={\cal P}(^*\Theta_A)={\cal P}(\Phi_A^@)={\cal P}(\Phi_A)$.

(vii) $^*\Theta_A, \Phi_A^{@},\Phi_A\in {\cal H}(A)$.
\end{lemma}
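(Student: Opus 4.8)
The plan is to check directly, for each of the three functions $\Phi_A$, $\Phi_A^@$ and $^*\Theta_A$, the four defining requirements of membership in ${\cal H}(A)$ (Definition \ref{def-repr}): convexity, lower semicontinuity, the global inequality $\geq q$ on $B$, and the inclusion $A\subseteq {\cal P}(\cdot)$ (equivalently, equality with $q$ on $A$). Each of these properties, for each of the three functions, has already been recorded in one of the items (i)--(vi), so the argument amounts to collecting the relevant statements rather than proving anything new.

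First I would treat $^*\Theta_A$, for which everything is immediate from (iv): it is convex and lower semicontinuous, the chain $^*\Theta_A\geq \Phi_A^@\geq \Phi_A\vee q\geq q$ gives $^*\Theta_A\geq q$ on $B$, and the identity $^*\Theta_A=q$ on $A$ yields $A\subseteq {\cal P}(^*\Theta_A)$. Hence $^*\Theta_A\in {\cal H}(A)$.

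Next I would handle $\Phi_A^@$. Its convexity and lower semicontinuity follow either from the standard fact that a $@$-conjugate is a supremum of the continuous affine maps $c\mapsto\lfloor b,c\rfloor-\Phi_A(b)$ (the continuity of $\lfloor b,\cdot\rfloor$ being Remark \ref{cont-q} (i)), or, via Remark \ref{cont-q} (ii), from the convexity and lower semicontinuity of an ordinary conjugate composed with the continuous linear map $\iota$. The inequality $\Phi_A^@\geq q$ on $B$ is again part of the chain in (iv), and $\Phi_A^@=q$ on $A$ (also in (iv)) gives $A\subseteq {\cal P}(\Phi_A^@)$. Thus $\Phi_A^@\in {\cal H}(A)$.

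Finally, for $\Phi_A$, convexity, lower semicontinuity and $A\subseteq {\cal P}(\Phi_A)$ are exactly the content of (ii). The only property not already available for a general $q$-positive set is the global inequality $\Phi_A\geq q$ on $B$, and this is precisely where the hypothesis of maximal $q$-positivity is genuinely used: it is supplied by (vi). I expect this to be the one delicate point of the argument; for $^*\Theta_A$ and $\Phi_A^@$ the domination by $q$ holds already by (iv), whereas $\Phi_A\geq q$ on all of $B$ can fail without maximality and must be drawn from (vi). With these observations all three memberships are established, confirming that (vii) is a direct consequence of (i)--(vi).
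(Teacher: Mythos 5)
Your proposal is correct and matches the paper's treatment: the paper quotes (i)--(vi) from Simons' work and simply asserts that (vii) is a direct consequence, which is exactly the deduction you carry out. Your only addition is the explicit justification of convexity and lower semicontinuity of $\Phi_A^@$ via the conjugate structure and Remark \ref{cont-q}, which is the same device the paper itself uses in the proof of Proposition \ref{furth-prop-fitz} (iv).
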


Next we give other characterizations of the function $^*\Theta_A$
and establish the connection between $\Lambda_{E^A},
\Lambda_{\overline{E_A}}$ and $\Phi_A, {^*\Theta_A}$,
respectively.

\begin{proposition}\label{furth-prop-fitz} Let $B$ be a Banach SSD space (Simons space) and
$A\subseteq B$ a $q$-positive set. Then:

(i) $^*\Theta_A=\sup\{h:B\rightarrow\B: h\mbox{ proper, convex,
lower semicontinuous, }h\geq q\mbox{ on }B \ \mbox{and}$ $A
\subseteq {\cal P}(h)\}$.

(ii) $^*\Theta_A=\cl\co(q+\delta_A)$.

If, additionally, $A$ is maximally $q$-positive, then:

(iii) $\Lambda_{E^A}=\Phi_A$ and
$\Lambda_{\overline{E_A}}={^*\Theta_A}$.

(iv) If $h:B\rightarrow\B$ is a function such that $h\in{\cal
H}(A)$, then $h^@\in {\cal H}(A)$.
\end{proposition}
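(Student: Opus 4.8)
The plan is to prove Proposition~\ref{furth-prop-fitz} in four parts, relying heavily on the bijection machinery of Theorem~\ref{bijection} and Corollary~\ref{repr-min-max}, together with Lemma~\ref{propr-fitz}.

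\textbf{Part (i).} The claim is a supremum formula for $^*\Theta_A$ over the family of proper, convex, lower semicontinuous $h$ with $h\geq q$ on $B$ and $A\subseteq{\cal P}(h)$. The plan is to compare this family with the one appearing in Lemma~\ref{propr-fitz}(v), where $^*\Theta_A$ is the supremum over all proper, convex, lsc $h$ with $h\leq q$ on $A$. First I would observe that any $h$ in the present family satisfies $h\leq q$ on $A$: indeed $A\subseteq{\cal P}(h)$ means $h=q$ on $A$, so in particular $h\leq q$ there. Hence the present family is contained in the family of (v), giving the inequality $^*\Theta_A\geq\sup\{\cdots\}$ for the present family. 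For the reverse, I would note that $^*\Theta_A$ itself belongs to the present family: by Lemma~\ref{propr-fitz}(iv) it is proper, convex, lsc, satisfies $^*\Theta_A\geq\Phi_A\vee q\geq q$ on $B$, and $^*\Theta_A=q$ on $A$ gives $A\subseteq{\cal P}(^*\Theta_A)$. Since $^*\Theta_A$ is one of the competitors, the supremum over the present family is at least $^*\Theta_A$, and equality follows.

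\textbf{Part (ii).} To show $^*\Theta_A=\cl\co(q+\delta_A)$, the plan is to use conjugation. The function $q+\delta_A$ agrees with $q$ on $A$ and is $+\infty$ off $A$. I would compute $(q+\delta_A)^@(b^*)=\sup_{a\in A}[\lfloor a,\cdot\rfloor\text{-pairing}-q(a)]$, which via Remark~\ref{cont-q}(ii) and Definition~\ref{fitz-f} should reproduce $\Theta_A$ (taking conjugates with respect to the correct pairing). Then, since $^*\Theta_A$ is defined as the conjugate of $\Theta_A$, I would identify $^*\Theta_A=(q+\delta_A)^{@@}$ and invoke the biconjugation result stated in Section~2 (that $f^{**}=\cl f$ when $f$ is convex with proper lsc hull, adapted to the $^@$ pairing through $\iota$) to conclude $(q+\delta_A)^{@@}=\cl\co(q+\delta_A)$. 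The care needed here is to track whether conjugation is taken in $B^*$ or pulled back to $B$ via $\iota$, and to verify properness of the closed convex hull so the biconjugation theorem applies; this is the most delicate bookkeeping step.

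\textbf{Parts (iii) and (iv).} For (iii), under maximal $q$-positivity, the plan is to identify the extremal representative functions. By Corollary~\ref{repr-min-max}(i), $\Lambda_{E^A}$ and $\Lambda_{\overline{E_A}}$ are the minimum and maximum of ${\cal H}(A)$. By Lemma~\ref{propr-fitz}(vi)--(vii), $\Phi_A,{^*\Theta_A}\in{\cal H}(A)$ with $\Phi_A\leq{^*\Theta_A}$, and part~(i) just proved shows $^*\Theta_A$ is the supremum of ${\cal H}(A)$, hence its maximum, giving $\Lambda_{\overline{E_A}}={^*\Theta_A}$. For the minimum, I would show $\Phi_A\leq h$ for every $h\in{\cal H}(A)$: since $h$ is convex, lsc, $h\geq q$ on $B$ and $h=q$ on $A$, one has $h\geq\cl\co(q+\delta_A)$? --- no, that direction is wrong; instead I would use Lemma~\ref{propr-fitz}(i), $\Phi_A(b)=\sup_{a\in A}[\lfloor a,b\rfloor-q(a)]$, and the Young--Fenchel-type inequality $h(b)+h^@(\iota a)\geq\lfloor a,b\rfloor$ together with $h^@(\iota a)\leq h(a)=q(a)$ for $a\in A$ (using $h\leq q$ on $A$ and the definition of $^@$) to get $h(b)\geq\lfloor a,b\rfloor-q(a)$ for all $a\in A$, whence $h\geq\Phi_A$. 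Thus $\Phi_A$ is the minimum, so $\Lambda_{E^A}=\Phi_A$. Finally for (iv), given $h\in{\cal H}(A)$, I would verify $h^@\in{\cal H}(A)$ by checking the defining conditions: $h^@=h^*\circ\iota$ is convex and lsc (as a conjugate composed with the continuous linear $\iota$), and the sandwich $\Phi_A\leq h\leq{^*\Theta_A}$ from (iii) together with the order-reversing property of $^@$ and the self-conjugacy relations in Lemma~\ref{propr-fitz}(iii)--(iv) (namely $({^*\Theta_A})^@=\Phi_A$ and $\Phi_A\leq\Phi_A^@\leq{^*\Theta_A}$) should yield $\Phi_A\leq h^@\leq{^*\Theta_A}$, so by Corollary~\ref{repr-min-max}(iii) we conclude $h^@\in{\cal H}(A)$. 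The main obstacle throughout is the consistent handling of the $^@$-conjugate versus the ordinary $^*$-conjugate under the embedding $\iota$, especially in Part~(ii).
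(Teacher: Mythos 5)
Your parts (i) and (iv) are fine: (i) is exactly the paper's argument (compare the family with the one in Lemma \ref{propr-fitz} (v) and note that $^*\Theta_A$ itself is a competitor), and (iv) is a valid and even slightly cleaner variant (sandwiching $h^@$ between $({}^*\Theta_A)^@=\Phi_A$ and $\Phi_A^@\leq{}^*\Theta_A$ and invoking Corollary \ref{repr-min-max} (iii), where the paper instead cites an external theorem of Simons). The problems are in (ii) and in the first half of (iii).

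In (ii) the identity you commit to, $^*\Theta_A=(q+\delta_A)^{@@}$, is false in general: $(q+\delta_A)^{@@}=\bigl((q+\delta_A)^@\bigr)^@=\Phi_A^@$, and the paper's Remark \ref{not-id1} and Example \ref{not-id2} are devoted precisely to the fact that $\Phi_A^@\neq{}^*\Theta_A$ in general. The reason is the one you flag but do not resolve: the second $^@$-conjugation only tests functionals in $\iota(B)\subseteq B^*$, so biconjugation for the $\lfloor\cdot,\cdot\rfloor$-pairing returns the closed convex hull for the weaker topology $\sigma(B,\iota(B))$ (the $s\times w^*$-closure in Example \ref{not-id2}), not $\cl\co(q+\delta_A)$. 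The correct route is $\Theta_A=(q+\delta_A)^*$ on all of $B^*$, hence $^*\Theta_A=(q+\delta_A)^{**}$ restricted to $B$, and then Fenchel--Moreau in the Banach space $B$ (with properness of $\cl\co(q+\delta_A)$ coming from properness of $\Phi_A$). That works, but note the paper avoids all of this: it simply squeezes $^*\Theta_A\leq\cl\co(q+\delta_A)\leq q+\delta_A$ from $^*\Theta_A\leq q$ on $A$ and then applies Lemma \ref{propr-fitz} (v) to the middle function.

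In (iii), your derivation of $\Lambda_{E^A}=\Phi_A$ hinges on the claim $h^@(a)\leq q(a)$ for $a\in A$, which you justify by ``$h\leq q$ on $A$ and the definition of $^@$''. That justification does not work: from the definition, $h^@(a)=\sup_b[\lfloor b,a\rfloor-h(b)]$, and bounding $h$ below by $q$ gives $\lfloor b,a\rfloor-q(b)=q(a)-q(a-b)$, which is not $\leq q(a)$ since $q(a-b)$ can be negative. Moreover $h^@(a)\leq q(a)$ is, after unwinding, literally equivalent to $h\geq\lfloor\cdot,a\rfloor-q(a)$, i.e.\ to the inequality $h\geq\Phi_A$ you are trying to prove, so as written the argument is circular. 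The claim is true, but it needs the convexity of $h$ together with $h\geq q$ via a limiting argument along the segment $[a,b]$ (this is essentially \cite[Theorem 2.15 (b)]{simons-ssd}, which the paper cites only later, in (iv)). The paper sidesteps the issue entirely: it computes directly from Lemma \ref{propr-fitz} (i) that $b\in E^A(\varepsilon)\Leftrightarrow\Phi_A(b)\leq\varepsilon+q(b)$, i.e.\ $E^A=A_{\Phi_A}$, and then reads off $\Lambda_{E^A}=\Phi_A$ from Theorem \ref{bijection}. Your identification $\Lambda_{\overline{E_A}}={}^*\Theta_A$ via part (i) and Corollary \ref{repr-min-max} is the same as the paper's and is fine.
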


\begin{proof} (i) We have $\{h:B\rightarrow\B: h\mbox{ proper, convex, lower semicontinuous,
}h\geq q\mbox{ on }B$ $\mbox{ and }A\subseteq{\cal
P}(h)\}\subseteq \{h:B\rightarrow\B: h\mbox{ proper, convex, lower
semicontinuous, }h\leq q\mbox{ on }A\}$ hence from Lemma
\ref{propr-fitz} (v) we get
$$\sup\{h:B\rightarrow\B: h\mbox{ proper, convex, lower semicontinuous,
}$$$$h\geq q\mbox{ on }B\mbox{ and }A\subseteq{\cal P}(h)\}\leq
{^*\Theta_A}.$$ On the other hand, $^*\Theta_A\in
\{h:B\rightarrow\B: h\mbox{ proper, convex, lower semicontinuous,
}h\geq q\mbox{ on }B\mbox{ and }A\subseteq{\cal P}(h)\}$ (see
Lemma \ref{propr-fitz} (iv)), thus
$$\sup\{h:B\rightarrow\B: h\mbox{ proper, convex, lower semicontinuous, }$$$$h\geq
q\mbox{ on }B\mbox{ and }A\subseteq{\cal P}(h)\}\geq
{^*\Theta_A}$$ and the equality follows.

(ii) Since $^*\Theta_A\leq q$ on $A$ we have $^*\Theta_A\leq
q+\delta_A$ on $B$, hence
\begin{equation}\label{ineq-penot-f}^*\Theta_A\leq
\cl\co(q+\delta_A)\leq q+\delta_A.\end{equation} The above
inequality shows that $\cl\co(q+\delta_A)$ is a proper, convex,
lower semicontinuous function such that $\cl\co(q+\delta_A)\leq q$
on $A$. Applying Lemma \ref{propr-fitz} (v) we obtain
$^*\Theta_A\geq\cl\co(q+\delta_A)$, which combined with
\eqref{ineq-penot-f} delivers the desired result.

(iii) From Lemma \ref{propr-fitz} (i) and the definition of $E^A$
we obtain $$b\in E^A(\varepsilon)\Leftrightarrow
q(b-c)\geq-\varepsilon\mbox{ for all }c\in A\Leftrightarrow
\inf_{c\in A} q(b-c)\geq-\varepsilon$$$$\Leftrightarrow
q(b)-\Phi_A(b)\geq-\varepsilon\Leftrightarrow
\Phi_A(b)\leq\varepsilon+q(b).$$ This is nothing else than
$E^A=A_{\Phi_A}$. Theorem \ref{bijection} implies that
$\Lambda_{E^A}=\Lambda_{A_{\Phi_A}}=\Phi_A$.

The equality $\Lambda_{\overline{E_A}}={^*\Theta_A}$ follows from
(i) and Corollary \ref{repr-min-max}.

(iv) From (iii), Corollary \ref{repr-min-max} and \cite[Theorem
2.15 (b)]{simons-ssd} we get $h^@\geq q$ on $B$ and ${\cal
P}(h)={\cal P}(h^@)=A$. The function $h^@$ is proper, convex and
the lower semicontinuity follows from the definition of $h^@$ and
Remark \ref{cont-q}, hence $h^@\in {\cal H}(A)$.
\end{proof}

\begin{remark}\label{gen-maps} Proposition \ref{furth-prop-fitz} (iv) is a
generalization of \cite[Theorem 5.3]{bu-sv-02} to Banach SSD
spaces (Simons spaces).\end{remark}

\begin{remark}\label{not-id1} In general, the functions $^*\Theta_A$ and
$\Phi_A^@$ are not identical. An example in this sense was given
by C. Z\u alinescu (see \cite[Remark 2.14]{simons-ssd}). However,
this example is restrictive, since it is given in a Banach space
$B$ such that $\lfloor\cdot,\cdot\rfloor=0$ on $B\times B$. An
alternative example, originally due to M.D. Voisei and C. Z\u
alinescu, can be given to prove this fact (see Example
\ref{not-id2} below).\end{remark}

\noindent Before we present this example, we need the following
remark.

\begin{remark}\label{x-x-star} Consider again the particular setting of Example
\ref{ex-ssd-sp}(d) and Example \ref{ex-b-ssd-sp} (b), namely when
$B=X\times X^*$, where $X$ is a nonzero Banach space. Let $A$ be a
nonempty monotone subset of $X\times X^*$. In this case
$q(x,x^*)=\langle x,x^*\rangle$ for all $(x,x^*)\in X\times X^*$
and the function $\Theta_A:X^*\times X^{**}\rightarrow\B$ is
defined by
$$\Theta_A(x^*,x^{**})=\sup_{(s,s^*)\in A}[\langle
s,x^*\rangle+\langle s^*,x^{**}\rangle-\langle s,s^*\rangle]\mbox{
for all }(x^*,x^{**})\in X^*\times X^{**}.$$ The function
$\Phi_A:X\times X^*\rightarrow\B$ has the following form:
$$\Phi_A(x,x^*)=\sup_{(s,s^*)\in A}[\langle s,x^*\rangle+\langle
x,s^*\rangle-\langle s,s^*\rangle]\mbox{ for all }(x,x^*)\in
X\times X^*,$$ that is $\Phi_A=(q+\delta_A)^@$. $\Phi_A$ is the
Fitzpatrick function of $A$. Introduced by S. Fitzpatrick in
\cite{fitz} in 1988 and rediscovered after some years in
\cite{bu-sv-02, legaz-thera2}, it proved to be very important in
the theory of maximal monotone operators, revealing important
connections between convex analysis and monotone operators (see
\cite{bausch, borwein, BCW, BCW-set-val, BGW-max-comp, bu-sv-02,
legaz-sv, penot, penot-zal, sim1, simons, simons-zal, voisei} and
the references therein). Applying the Fenchel-Moreau Theorem we
obtain
\begin{equation}\label{fitz-conj}\Phi_A^@=\cl\nolimits_{s\times w^*}\co(q+\delta_A)\end{equation} (the
closure is taken with respect to the strong-weak$^*$ topology on
$X\times X^*$). The function $^*\Theta_A:X\times X^*\rightarrow\B$
is defined by $$^*\Theta_A(y,y^*)=\sup_{(x^*,x^{**})\in X^*\times
X^{**}}[\langle y,x^*\rangle+\langle
y^*,x^{**}\rangle-\Theta_A(x^*,x^{**})]\mbox{ for all }(y,y^*)\in
X\times X^*.$$
\end{remark}

\begin{example}\label{not-id2} As in \cite[page 5]{voisei-zali-lcs}, consider $E$ a nonreflexive
Banach space, $X:=E^*$ and $A:=\{0\}\times \widehat{E}$, which is
a monotone subset of $X\times X^*$. Let us notice that for $A$ we
have $q+\delta_A=\delta_A$. By applying Proposition
\ref{furth-prop-fitz} (ii) we obtain:
$^*\Theta_A=\cl\co\delta_A=\delta_A$ (the closure is taken with
respect to the strong topology of $X\times X^*$). Further, by
using \eqref{fitz-conj} and the Goldstine Theorem we get
$\Phi_A^@=\cl_{s\times w^*}\co\delta_A=\delta_{\{0\}\times
E^{**}}\neq {^*\Theta_A}$ (the closure is considered with respect
to the strong-weak$^*$ topology of $X\times X^*$).\end{example}

In the last part of the paper we deal with another subfamily of
$\E(A)$, namely the one of closed and \emph{additive}
enlargements. In this way we extend the results from \cite{sv,
bu-sv-02} to SSD Banach spaces (Simons spaces).

\begin{definition}\label{additive} Let $B$ be a Banach SSD space (Simons space). We say that the multifunction $E:\R_+\rightrightarrows
B$ is additive if for all $\varepsilon_1, \varepsilon_2\geq 0$ and
$b^1\in E(\varepsilon_1)$, $b^2\in E(\varepsilon_2)$ one has
$$q(b^1-b^2) \geq -(\varepsilon_1 + \varepsilon_2).$$
In case $A \subseteq B$ is a maximally $q$-positive set we denote
by $\E_{ca}(A) = \{E \in \E_c(A) : E \ \mbox{is additive}\}$.
\end{definition}

We have the following characterization of the set $\E_{ca}(A)$.

\begin{theorem}\label{char-add} Let $B$ be a Banach SSD space (Simons space), $A\subseteq B$ a maximally $q$-positive set and
$E \in \E_c(A)$. Then:
$$E \in \E_{ca}(A) \Leftrightarrow \Lambda_E^@ \leq
\Lambda_E.$$
\end{theorem}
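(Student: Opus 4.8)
The plan is to translate the additivity condition on the enlargement $E$ into a statement about the associated representative function $\Lambda_E$, and then recognize that statement as precisely $\Lambda_E^@ \leq \Lambda_E$. By Theorem \ref{bijection} we know $E = A_{\Lambda_E}$, so membership $b^i \in E(\varepsilon_i)$ is equivalent to the inequality $\Lambda_E(b^i) \leq \varepsilon_i + q(b^i)$, i.e. $(\Lambda_E - q)(b^i) \leq \varepsilon_i$. The additivity condition asks that $q(b^1 - b^2) \geq -(\varepsilon_1 + \varepsilon_2)$ hold whenever these two inequalities are satisfiable. First I would observe that the sharpest instance is obtained by taking $\varepsilon_i := (\Lambda_E - q)(b^i)$ (the smallest admissible values), so that additivity of $E$ is equivalent to the purely functional statement
\begin{equation}\label{add-funct}
q(b^1-b^2) \geq -\big[(\Lambda_E-q)(b^1) + (\Lambda_E-q)(b^2)\big]\mbox{ for all }b^1,b^2\in B.
\end{equation}

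**Next I would** unfold \eqref{add-funct} using the calculus rule \eqref{q}, which gives $q(b^1-b^2) = q(b^1)+q(b^2)-\lfloor b^1,b^2\rfloor$. Substituting and cancelling the $q(b^i)$ terms, \eqref{add-funct} becomes $\lfloor b^1,b^2\rfloor \leq \Lambda_E(b^1) + \Lambda_E(b^2)$ for all $b^1,b^2\in B$. Now I would bring in the conjugate with respect to the pairing $\lfloor\cdot,\cdot\rfloor$: recall $\Lambda_E^@(b^2) = \sup_{b^1\in B}\{\lfloor b^1,b^2\rfloor - \Lambda_E(b^1)\}$. The inequality $\lfloor b^1,b^2\rfloor - \Lambda_E(b^1) \leq \Lambda_E(b^2)$ holding for all $b^1$ is, after taking the supremum over $b^1$, exactly $\Lambda_E^@(b^2) \leq \Lambda_E(b^2)$; and conversely $\Lambda_E^@ \leq \Lambda_E$ reproduces the pairwise inequality by the very definition of the supremum. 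This yields the equivalence between \eqref{add-funct} and $\Lambda_E^@ \leq \Lambda_E$.

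**The one point requiring care** is the reduction to the extremal choice $\varepsilon_i = (\Lambda_E-q)(b^i)$. For the forward direction, additivity of $E$ must be applied at these specific (smallest) values; since $E\in\E_c(A)$ gives $\Lambda_E\geq q$ by Proposition \ref{pr}, these values are nonnegative and hence admissible, and $b^i\in E(\varepsilon_i)$ holds by the characterization in Proposition \ref{le} (iv). For the reverse direction, given arbitrary $\varepsilon_i\geq 0$ with $b^i\in E(\varepsilon_i)$, one has $(\Lambda_E-q)(b^i)\leq\varepsilon_i$, so \eqref{add-funct} immediately implies the weaker bound $q(b^1-b^2)\geq -(\varepsilon_1+\varepsilon_2)$ demanded by additivity. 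I expect this bookkeeping with the $\varepsilon_i$ — confirming admissibility and monotonicity in the right direction — to be the only genuine obstacle; the algebraic heart of the argument is the single application of \eqref{q} together with the definition of $\Lambda_E^@$, and is entirely routine once the translation to \eqref{add-funct} is in place.
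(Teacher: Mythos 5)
Your proposal is correct and follows essentially the same route as the paper: both directions reduce additivity to the extremal inequality $q(b^1-b^2)\geq -(\lambda_E(b^1)+\lambda_E(b^2))$ for all $b^1,b^2\in B$ (using Proposition \ref{le} to justify the choice $\varepsilon_i=\lambda_E(b^i)$ and to handle nonnegativity), and then convert it via the calculus rule \eqref{q} into $\lfloor b^1,b^2\rfloor\leq\Lambda_E(b^1)+\Lambda_E(b^2)$, which is exactly $\Lambda_E^@\leq\Lambda_E$ by definition of the conjugate. The bookkeeping you flag (admissibility of the extremal $\varepsilon_i$, the trivial case $\lambda_E(b^i)=+\infty$) is precisely what the paper's proof also attends to.
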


\begin{proof}
Assume first that $E \in \E_{ca}(A)$ and take $b^1, b^2$ two
arbitrary elements in $B$. By Proposition \ref{le} (iii) follows
that $\lambda_E(b^1) \geq 0$ and $\lambda_E(b^2) \geq 0$. We claim
that
$$q(b^1-b^2) \geq -(\lambda_E(b^1) + \lambda_E(b^2)).$$
In case $\lambda_E(b^1) = +\infty$ or $\lambda_E(b^2) = +\infty$
(or both), this fact is obvious. If $\lambda_E(b^1)$ and
$\lambda_E(b^2)$ are finite, the inequality above follows by using
that (cf. Proposition \ref{le} (i)) $(b_1, \lambda_E(b^1))$,
$(b_2, \lambda_E(b^2)) \in \epi(\lambda_E) = G(E^{-1})$ and that
$E$ is additive. Consequently, for all $b^1, b^2 \in B$,
$$\lambda_E(b^1) + q(b^1) \geq \lfloor b^1, b^2 \rfloor - (\lambda_E(b^2) +
q(b^2)) \Leftrightarrow \Lambda_E(b^1) \geq \lfloor b^1, b^2
\rfloor - \Lambda_E(b^2).$$ This means that for all $b^1 \in B$,
$\Lambda_E^@(b^1) \leq \Lambda_E(b^1)$.

Assume now that $\Lambda_E^@ \leq \Lambda_E$ and take arbitrary
$\varepsilon_1, \varepsilon_2\geq 0$ and $b^1\in
E(\varepsilon_1)$, $b^2\in E(\varepsilon_2)$. This means that
$\lambda_E(b^1) \leq \varepsilon_1$ and $\lambda_E(b^2) \leq
\varepsilon_2$. Since $\Lambda_E(b^1) \geq \Lambda_E^@(b^1)$, one
has
$$\lambda_E(b^1) + q(b^1) \geq \lfloor b^1, b^2 \rfloor - (\lambda_E(b^2) +
q(b^2))$$ and from here
$$q(b^1-b^2) \geq -(\lambda_E(b^1) + \lambda_E(b^2)) \geq -(\varepsilon_1 +
\varepsilon_2).$$ This concludes the proof.
\end{proof}

One can use the theorem above for providing an element in the set
$\E_{ca}(A)$ in case $A$ is a maximally $q$-positive set.

\begin{proposition}\label{el-add} Let $B$ be a Banach SSD space (Simons space) and $A\subseteq B$ a maximally $q$-positive set. Then $\overline{E_A} \in  \E_{ca}(A)$.
\end{proposition}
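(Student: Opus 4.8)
The plan is to reduce the claim to the characterization of additivity provided by Theorem \ref{char-add} and then to translate everything into the language of the Simons functions $\Phi_A$ and ${}^*\Theta_A$, whose relevant inequalities are already recorded in Lemma \ref{propr-fitz}. First I would observe that the membership $\overline{E_A}\in\E_c(A)$ is not in question: it is exactly the content of Proposition \ref{prop-enl-cl} (iii), which identifies $\overline{E_A}$ as the smallest element of $\E_c(A)$. Hence the only thing left to verify is that $\overline{E_A}$ is additive, and by Theorem \ref{char-add} this is equivalent to establishing the single inequality $\Lambda_{\overline{E_A}}^@\leq\Lambda_{\overline{E_A}}$ on $B$.

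Next I would rewrite this inequality entirely in terms of the Simons functions. By Proposition \ref{furth-prop-fitz} (iii) we have $\Lambda_{\overline{E_A}}={}^*\Theta_A$, so the target inequality becomes $({}^*\Theta_A)^@\leq{}^*\Theta_A$. The conjugate on the left is computed by Lemma \ref{propr-fitz} (iii), which gives $({}^*\Theta_A)^@=\Phi_A$. Thus the required inequality is precisely $\Phi_A\leq{}^*\Theta_A$ on $B$.

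Finally, this last inequality is available directly from Lemma \ref{propr-fitz} (vi): since $A$ is maximally $q$-positive, one has ${}^*\Theta_A\geq\Phi_A^@\geq\Phi_A\geq q$ on $B$, and in particular ${}^*\Theta_A\geq\Phi_A$. Chaining the three identifications yields
$$\Lambda_{\overline{E_A}}^@=({}^*\Theta_A)^@=\Phi_A\leq{}^*\Theta_A=\Lambda_{\overline{E_A}},$$
which is exactly the condition in Theorem \ref{char-add} needed to conclude $\overline{E_A}\in\E_{ca}(A)$. There is essentially no computational obstacle here; the only real work is bookkeeping — making sure the correct identifications $\Lambda_{\overline{E_A}}={}^*\Theta_A$ and $({}^*\Theta_A)^@=\Phi_A$ are invoked and that maximal $q$-positivity is used (it is what licenses $\Phi_A\leq{}^*\Theta_A$ via part (vi) rather than only the weaker statements valid for general $q$-positive sets). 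The proof is therefore short, resting on Theorem \ref{char-add}, Proposition \ref{furth-prop-fitz} (iii), and Lemma \ref{propr-fitz} (iii) and (vi).
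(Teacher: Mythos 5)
Your proof is correct and follows essentially the same route as the paper: both reduce the claim to the inequality $\Lambda_{\overline{E_A}}^@\leq\Lambda_{\overline{E_A}}$ via Theorem \ref{char-add} and then verify it through the chain $(\Lambda_{\overline{E_A}})^@=({}^*\Theta_A)^@=\Phi_A\leq{}^*\Theta_A=\Lambda_{\overline{E_A}}$. The only cosmetic difference is that the paper obtains $\Phi_A\leq{}^*\Theta_A$ from Lemma \ref{propr-fitz} (iv) (valid for any $q$-positive set), whereas you cite part (vi); both suffice here.
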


\begin{proof}
By Proposition \ref{furth-prop-fitz} (iii) and Lemma
\ref{propr-fitz} (iii)-(iv) we have  $(\Lambda_{\overline{E_A}})^@
= (^*\Theta_A)^@=\Phi_A \leq {^*\Theta_A} =
\Lambda_{\overline{E_A}}$. Theorem \ref{char-add} guarantees that
$\overline{E_A} \in \E_{ca}(A)$.
\end{proof}

\end{document}